\def\co{\colon\thinspace}
\newcommand{\id}{\mbox{\rm id}}
\newcommand{\e}{\mathrm{e}}
\newcommand{\C}{\mathbb{C}}
\newcommand{\N}{\mathbb{N}}
\newcommand{\R}{\mathbb{R}}
\newcommand{\Z}{\mathbb{Z}}
\newcommand{\myparskip}{\parskip 4pt}
\newtheorem{thm}{Theorem}
\newtheorem*{thm*}{Theorem}
\newtheorem{lem}[thm]{Lemma}
\newtheorem{prop}[thm]{Proposition}
\newtheorem*{claim*}{Claim}
\theoremstyle{definition}
\newtheorem*{rem*}{Remark}
\newtheorem*{rems*}{Remarks}
\newtheorem*{remdefn*}{Remark/Definition}
\newtheorem{defn}{Definition}
\newtheorem*{defn*}{Definition}
\newtheorem*{ex*}{Example}
\newtheorem*{que*}{Question}
\begin{document}
\title[]{Open book decompositions of fibre sums in contact topology}
\author{Mirko Klukas}
\address{Mathematisches Institut, Universit\"at zu K\"oln, Weyertal 86--90, 50931 K\"oln, Germany}
\email{mklukas@math.uni-koeln.de}
\date{\today}
\maketitle
\begin{abstract}
In the present paper we describe compatible open books for the
fibre connected sum along binding components of open books, as well as for the
fibre connected sum along multi-sections of open books.
As an application the first description provides simple ways of constructing open books supporting all tight contact structures on $T^3$, recovering a result by van Horn-Morris, as well as an open book supporting the result of a Lutz twist along a binding component of an open book, recovering a result by Ozbagci--Pamuk.
\end{abstract}
%
%
\section*{Introduction}
%
%
According to a theorem of Alexander~\cite{Ale-SystemsOfKnottedCurves} every closed oriented
$3$-manifold admits a so-called \textit{open book decomposition}.
While it had been known for almost 40
years that open books carry a natural contact structure \cite{MR0375366}, at
the beginning of the millennium it turned out that this was just one fragment
of a much deeper correlation. As was observed by Giroux \cite{MR1957051},
contact structures in dimension $3$ are of purely topological nature: he
established a one-to-one correspondence between isotopy classes of contact
structures and open book decompositions up to positive stabilisation.
Ever since it is of interest to recover properties and constructions of contact structures in the language of open books.
\parskip 0pt

In the present paper we approach how the contsruction of the \textit{fibre connected sum} affects an underlying open book structure of the original manifold under certain assumptions. Fibre connected sums in the contact setting recently drew some attention appearing in Wendl's~\cite{2010arXiv1009.2746W} notion of \textit{planar torsion}, an obstruction for strong fillability generalising overtwistedness and Giroux torsion. In essence, a contact manifold admits planar torsion if it can be written as the \textit{binding sum}, i.e.~the fibre connected sum along binding components of an open book, of a non-trivial ($\geq 2$) number of open books, one of which has planar pages.
\myparskip

The present paper, suppressing the preliminaries, splits into two parts, \S\ref{sec:binding sum} and \S\ref{sec:multisections}, which can be read independently. The main results are descriptions of compatible open books for:
\begin{list}{}{}
		\item[I.]  The fibre connected sum along binding components of open books -- cf.\ Theorem~\ref{prop:open book supporting the binding sum} on page~\pageref{prop:open book supporting the binding sum}.
		\item[II.]  The fibre connected sum along multi-sections of open books -- cf.\ 
Theorem~\ref{prop:multisections} on page~\pageref{prop:multisections}.
\end{list}
As an application the first description provides simple ways of constructing open books supporting all tight contact structures on $T^3$, recovering a result by van Horn-Morris~\cite{vanHornMorris-Thesis}, as well as an open book supporting the result of a Lutz twist along a binding component of an open book, recovering a result by Ozbagci--Pamuk~\cite{2009arXiv0905.0986O}.
%
\subsection*{Acknowledgements}
%
The results presented in this paper are part of my thesis~\cite{Klukas-Thesis}.
I want to thank my advisor Hansj\"org Geiges for introducing me to the world of
contact topology and for all the helpful discussions, especially for initializing 
the first part of the present paper. 
I deeply thank John Etnyre for many inspiring conversations and, in particular,
for initializing the second part of this paper.
\parskip 0pt 
 
The research for the second part of this paper took place during a research visit at the
Georgia Institute of Technology, Atlanta GA, USA, under the supervision of John
Etnyre. I want to thank GaTech and John Etnyre for their hospitality. This research
visit was additionally supported by the DAAD (German Academic Exchange Service). Overall
the author was supported by the DFG (German Research Foundation) as fellow of the 
graduate training programm \textit{Global structures in geometry and analysis} 
at the Mathematics Department of the University of Cologne, Germany.
%
%
\section{Preliminaries}
%
%
\subsection{Open books}
An \textbf{open book decomposition} of a $3$-dimensional manifold $M$ is a pair
$(B,\pi)$, where $B$ is a disjoint collection of embedded circles, in $M$, called the
  \textbf{binding} of the open book and
  $\pi\co M\setminus B \to S^1$ is a (smooth, locally trivial)
  fibration such that 
  each fibre $\pi^{-1}(\varphi)$, $\varphi\in S^1$, corresponds to the interior
  of a compact hypersurface $\Sigma_\varphi \subset M$ with
  $\partial\Sigma_\varphi = B$.
  The hypersurfaces $\Sigma_\varphi$, $\varphi \in S^1$, are called the
  \textbf{pages} of the open book.
\parskip 0pt

In some cases we are not interested in the exact position of the binding or the pages of an open book decompositon inside the ambient space. Therefore, given an open book decomposition $(B,\pi)$ of a $3$-manifold $M$, we could ask for the relevant data to remodel the ambient space $M$ and its underlying open books structure $(B,\pi)$, say up to diffemorphism. This leads us to the following notion.
\parskip 0pt

An \textbf{abstract open books} is a pair $(\Sigma,\phi)$, where $\Sigma$ is a compact surface with non-empty boundary $\partial \Sigma$, called the \textbf{page} and $\phi\co\thinspace \Sigma \to \Sigma$ is a diffeomorphism equal to the identity near $\partial \Sigma$, called the \textbf{monodromy} of the open book.
Let $\Sigma(\phi)$ denote the mapping torus of $\phi$, that is, the quotient space obtained from $\Sigma \times [0,1]$ by identifying $(x,1)$ with $(\phi(x),0)$ for each $x \in \Sigma$. Then the pair $(\Sigma,\phi)$ determines a closed manifold $M_{(\Sigma,\phi)}$ defined by 
\begin{equation}
\label{eqn:abstract open book}
			M_{(\Sigma,\phi)} := \Sigma(\phi) \cup_{\id} (\partial \Sigma \times D^2),
\end{equation}
where we identify $\partial \Sigma(\phi) = \partial \Sigma \times S^1$ with $\partial (\partial \Sigma \times D^2)$ using the identity map.
Let $B \subset M_{(\Sigma,\phi)} $ denote the embedded link $\partial \Sigma \times \{0\}$. Then we can define a fibration $\pi\co M_{(\Sigma,\phi)}\setminus B \to S^1$ by
\[
	\left.
	 \begin{array}{l}
				\lbrack x,\varphi \rbrack  \\
				 \lbrack \theta, r\e^{i\pi\varphi} \rbrack  
		\end{array} \right\}
							\mapsto [\varphi],
\]
where we understand $M_{(\Sigma,\phi)}\setminus B$ as decomposed in (\ref{eqn:abstract open
book}) and $[x,\varphi] \in \Sigma(\phi)$ or $ [\theta, r\e^{i\pi\varphi}] \in
\partial\Sigma \times D^2 \subset \partial\Sigma \times \C$ respectively. Clearly $(B,\pi)$ defines an open book decomposition of $M_{(\Sigma,\phi)}$.
\parskip 0pt

On the other hand, an open book decomposition $(B,\pi)$ of some $3$-manifold
$M$ defines an abstract open book as follows: identify a neighbourhood of $B$
with $B \times D^2$ such that $B = B\times\{0\}$ and such that the fibration on
this neighbourhood is given by the angular coordinate, $\varphi$ say, on the
$D^2$-factor. We can define a $1$-form $\alpha$ on the complement $M \setminus (B \times
D^2)$ by pulling back $d\varphi$ under the fibration $\pi$, where this time we
understand $\varphi$ as the coordinate on the target space of $\pi$.
The vector field $\partial \varphi$ on $\partial\big(M \setminus (B \times D^2)
\big)$ extends to a nowhere vanishing vector field $X$ which we normalise by
demanding it to satisfy $\alpha(X)=1$. Let $\phi$ denote the time-$1$ map of the
flow of $X$. Then the pair $(\Sigma,\phi)$, with $\Sigma = \overline{\pi^{-1}(0)}$, defines an abstract open book such that
$M_{(\Sigma,\phi)}$ is diffeomorphic to $M$.
\subsubsection{Examples} 
\label{ex:open books for S3} 
Understand $S^3$ as the unit sphere in $\C^2$, i.e.~as the subset of $\C^2$ given by 
\[
			S^3 = \{ (z_1,z_2) \in \C^2 \co |z_1|^2 + |z_2|^2 = 1 \}.
\]
We give three examples of open book decompositions of $S^3$:
\myparskip

\begin{list}{}{}
	\item[(1)] Set $B = \{(z_1,z_2)\in S^3 \co z_1 = 0  \}$. Note that $B$ is an unknotted circle in $S^3$. Consider the fibration
	\[
				\pi \co S^3 \setminus B \to S^1\subset \C, \ (z_1,z_2) \mapsto \frac{z_1}{|z_1|}.
	\]
	In polar coordinates this map is given by $(r_1 \e^{i\varphi_1},r_2 \e^{i\varphi_2}) \mapsto \varphi_1$.
	Observe that $(B,\pi)$ defines an open book decomposition of $S^3$ with pages diffeomorphic to $D^2$ and trivial monodromy.
	\item[(2)] Set $B_+ = \{(z_1,z_2)\in S^3\co z_1 z_2 = 0  \}$.
	Observe that $B_+$ describes the positive Hopf link. Consider the fibration
	\[
				\pi_+ \co S^3 \setminus B_+ \to S^1\subset \C, 
				\ (z_1,z_2) \mapsto \frac{z_1z_2}{|z_1z_2|}.
	\]
	One can show that $(\pi_+ ,B_+)$ defines an open book decomposition of $S^3$ 
	with annular pages and monodromy given by a left-handed Dehn twist along the core of the annulus.
	\item[(3)] Set $B_- = \{(z_1,z_2)\in S^3\co z_1 \overline{z_2} = 0  \}$.
	Observe that $B_-$ describes the negative Hopf link. Consider the fibration
	\[
				\pi_- \co S^3 \setminus B_- \to S^1\subset \C, 
				\ (z_1,z_2) \mapsto \frac{z_1\overline{z_2}}{|z_1\overline{z_2}|}.
	\]
	One can show that $(\pi_+ ,B_+)$ defines an open book decomposition of $S^3$ 
	with annular pages and monodromy given by a right-handed Dehn twist along the core of the annulus.
\end{list}
\subsection{Stabilisations of open books}
Let $\Sigma$ be a compact surface with non-empty boundary and $\phi\co \Sigma \to \Sigma$ a diffeomorphism equal to the identity near $\partial \Sigma$.
Suppose further we are given a properly embedded arc $a \subset \Sigma$. 
The \textbf{positive (negative) stabilisation} of the abstract open book $(\Sigma,\phi)$ is the abstract open book obtained by adding a $1$-handle to the original page $\Sigma$ along the endpoints of $a$, and changing the monodromy by composing it with a right- (left-) handed Dehn twist along the simple closed curve obtained by the union of $a$ and the core of the $1$-handle.
The open books described in parts (2) and (3) of the preceding example are instances of a positive and negative stabilisation respectively of the open book described in the first part.
\parskip 0pt

A positive and negative stabilisation respectively can be understood as a
suitable connected sum with the open book described in part (2) or part (3) of
the preceding example. This viewpoint will draw more attention once we introduced the interplay of open books and contact structures, cf.\ \S\ref{sec:compatibility} below. To be more precise: a positive stabilisation
of an open book does not change the underlying contact structure, whereas a
negative stabilisation turns it into an overtwisted one.
\subsection{Compatibility}
\label{sec:compatibility}
A positive contact structure $\xi = \ker \alpha$ and an open book decomposition $(B,\pi)$ of $M$ are said to be \textbf{compatible} with each other, if the $2$-form $d\alpha$ induces a symplectic form on each page, defining its positive orientation, and the $1$-form $\alpha$ induces a positive contact form on $B$.
\parskip 0pt

In dimension equal to $3$ it can be shown that any two contact structures supported by the same open book decomposition are in fact contact isotopic (cf.~\cite{MR2249250}). 
The open books described in parts (1) and (2) of Example \ref{ex:open books for S3} above support the standard contact structure $\xi_{st}$ on $S^3$, whereas part (3) supports the overtwisted contact structure $\xi_{1}$ which is obtained by a Lutz twist along a transverse unknot $U \subset (S^3,\xi_{st})$ with self-linking number $-1$.
%
%
\subsection{The fibre connected sum}
\label{sec:def fibre sum}
%
%
Let $K \subset (M,\xi)$ be a positive transverse knot sitting in a contact $3$-manifold $(M,\xi)$. We may identify a neighbourhood of $K$ with an $\varepsilon$-neighbourhood $N_\varepsilon \subset S^1 \times \R^2$, where $K = S^1 \times \{0\}$. Then, with $S^1$-coordinate $\theta$, polar
coordinates $(r,\varphi)$ on $\R^2$, and for a suitable $\varepsilon > 0$, the contact structure
\[
			d\theta + r^2 \thinspace d\varphi=0
\]
provides a model for the above neighbourhood of $K$.
Let $M_K= M \setminus N_\delta$ denote the complement of a $\delta$-neighbourhood $N_{\delta} \subset N_\varepsilon$, with $0 < \delta < \varepsilon$, where we change the contact structure as follows. Replace the contact structure $\xi$ over $N_\varepsilon \setminus N_{\delta}$ by the kernel of the contact $1$-form
$d\theta + f(r) \thinspace d\varphi$, where $f: [\delta, \infty] \to \R$ is a function that equals $r^2$ away from $\delta$, satisfies $f' > 0$, $f'(\delta) = 1$ and $f(\delta)=0$. 
We will refer to $M_K$ as obtained by \textbf{blowing up $K$}.
The inverse operation of blowing up will be referred to as \textbf{collapsing}. 
\parskip 0pt

Suppose now we are given a pair of positive transverse knots $K_0,K_1 \subset (M,\xi)$ wich are endowed with a framing. Let $M_{K_0,K_1}$ denote the result of blowing up each of the knots $K_0$ and $K_1$. Each of the boundary tori associated to $K_0$ and $K_1$ respectively admits a natural identification with $\R^2/\Z^2$ by sending the meridian to the the $x$-axis and the framing-direction to the $y$-axis.
The \textbf{fibre connected sum $\#_{K_0,K_1}(M,\xi)$} is the closed oriented manifold
given as the quotient space
\[
			\#_{K_0,K_1}(M,\xi) := (M_{K_0,K_1}) / \sim
\]
where we identify the boundary tori with respect to the gluing map sending $(x,y)$ to $(-x,y)$. 
%
%
\section{An open book supporting the binding sum}
\label{sec:binding sum}
%
%
In the previous section we introduced the fibre connected sum along a pair of positively transverse knots in a contact manifold. In the present section we proceed by considering two special cases, the fibre connected sum along binding components of open books and the fibre connected sum along sections of open books. Throughout the whole section let $(M,\xi)$ be a closed, not necessarily connected, contact $3$-manifold supported by an open book $(\Sigma,\phi)$. Let $B \subset M$ denote the embedded binding of the open book.
\parskip 0pt

Suppose we have chosen the transverse knots $K_0$ and $K_1$ to be components of
the binding of the open book decomposition $(\Sigma,\phi)$. Since the pages
induce a natural framing for $K_0$ and $K_1$ respectively we can think of it as
the zero-framing and hence can measure all other trivialisations relative to
it. Note that performing the fibre connected sum with framings
$m_1,m_2 \in \mathbb{Z}$ equals the the result of performing the fibre
connected sum with framings $\widetilde m_1 = m_1 + m_2$ and $\widetilde m_2 =
0$. So in the following we just fix one framing assuming the other one to be
zero.
The result of performing the fibre connected sum along two binding components
$K_0$ and $K_0$ with framing $m \in \mathbb{Z}$ will be referred to as
\textbf{binding sum along $K_0$ and $K_1$} and will be denoted by
\[
	\boxplus_m (\Sigma,\phi).
\]
\parskip 0pt

Now suppose $K_0$ and $K_1$ are positive transverse knots intersecting  every
page transversely and exactly once. We will refer to such a knot as a
\textbf{section of the open book}, since it induces a section of the fibration
$M \setminus B \to S^1$. Again understand these knots as endowed with a
framing.
By nature of the sections we can embed the normal bundle $N_i$, $i=0,1$, such
that each fibre corresponds to a disc neighbourhood $D_i \subset \Sigma$ of the intersection
point $\{p_i\} = K_i \cap \Sigma$. In the following we will see that fibre connected sums of
this kind are nicely adapted to the underlying open book decomposition.
\parskip 0pt

The new fibre is obtained by replacing $D_0 \cup D_1$ by $[-1,1] \times S^1$.
However, the change of monodromy is less obvious. To see how the monodromy
changes, consider a vector field transverse to the fibres in $M$ with $K_0$
and $K_1$ as closed orbits such that the return map $h$ on a fibre $\Sigma$
fixes a disc neighbourhood $D_i$ of each $\Sigma \cap K_i$ and such that
closed orbits close to $K_0$ and $K_1$ represent the trivialisations of the
sections. The new monodromy is equal to $h$ on $\Sigma \setminus (D_0 \cup
D_1)$ and the identity on $[-1,1] \times S^1$.
\parskip 0pt

For our purposes it will be sufficient just to consider \textbf{trivial
sections}, that is, sections corresponding to a single fix point $p \in \Sigma$
of the monodromy of a given abstract open book $(\Sigma,\phi)$. In this
case we obtain natural trivialisations of the normal bundles given by a
parallel copy of the knot corresponding to a nearby point. Furthermore we can
assume the given monodromy $\phi$ to be the identity on $D_0 \cup D_1$. So by
the observations above, the new monodromy will be given by $\phi$ on $\Sigma
\setminus (D_0 \cup D_1)$ and the identity on $[0,1] \times S^1$.
\begin{lem}
\label{lem:sections are nice}
Let $K$ be a section of an open book $(B,\pi)$ supporting a contact $3$-manifold $(M,\xi)$. Then there is another contact structure $\xi'$ which is still supported by $(B,\pi)$ and such that the intersection point of $K$ and any page $\Sigma_\varphi$, $\varphi \in S^1$, corresponds to an elliptic singularity of the characteristic foliation $(\Sigma_\varphi)_{\xi'}$. The analogous statement holds for multi-sections of open books as defined in
Section \ref{sec:multisections}.
\end{lem}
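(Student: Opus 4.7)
The plan is to construct $\xi'$ directly, rather than perturb the given $\xi$, by running a modification of the Thurston--Winkelnkemper--Giroux construction adapted to the section. The strategy rests on a local model near $K$ in which the contact planes rotate so as to be tangent to each page precisely on $K$. Invoking the preceding discussion, I may assume that $K$ is a trivial section corresponding to a fixed point $p \in \Sigma$ of the monodromy $\phi$, with $\phi$ equal to the identity near $p$.

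For the local model, the smooth section $K$ of $\pi \co M \setminus B \to S^1$ admits a fibrewise tubular neighbourhood $U \cong S^1 \times D^2$ with coordinates $(\psi, r, \mu)$ in which $K = \{r = 0\}$, $\pi|_U$ is projection onto $\psi \in S^1$, and each page meets $U$ in the disc $\{\psi = \psi_0\}$. On $U$ consider
\[
	\alpha_{\mathrm{loc}} := d\psi + r^2\, d\mu.
\]
A direct computation gives $\alpha_{\mathrm{loc}} \wedge d\alpha_{\mathrm{loc}} = 2r\, d\psi \wedge dr \wedge d\mu$ (positive volume, hence contact); $\alpha_{\mathrm{loc}}(\partial_\psi) = 1$, so $K$ is positively transverse; $d\alpha_{\mathrm{loc}}|_{\{\psi = \psi_0\}} = 2r\, dr \wedge d\mu$ is a positive area form on each page disc; and the restriction $\alpha_{\mathrm{loc}}|_{\{\psi = \psi_0\}} = r^2\, d\mu$ generates the radial characteristic foliation $r\,\partial_r$, which has an elliptic singularity precisely on $K$.

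To globalise, choose a Liouville 1-form $\beta$ on $\Sigma$ such that $d\beta$ is a positive area form, $\beta$ is positive along $\partial \Sigma$, and $\beta = r^2\, d\mu$ in the chosen coordinates around $p$. Such a $\beta$ exists: prescribe the form near $p$ and near each binding circle using the standard models, then interpolate; the cohomological obstruction is absorbed by adding an exact 1-form. Since $\phi$ is the identity near both $p$ and $\partial \Sigma$, the Thurston--Winkelnkemper construction applied to $\beta$ and $\phi$, with a sufficiently large multiple of $d\psi$ added along the mapping torus and glued to the standard model on a neighbourhood of the binding, produces a contact form $\alpha'$ on $M$ compatible with $(B, \pi)$ whose restriction to $U$ coincides with $\alpha_{\mathrm{loc}}$. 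Its kernel $\xi'$ is the sought contact structure, and the multi-section case of Section~\ref{sec:multisections} follows verbatim by placing an independent copy of the local model at each intersection point of the multi-section with a fixed page.

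The main obstacle is arranging all the properties of $\beta$ simultaneously --- the prescribed germ at $p$ (or at each of the multi-section intersections), the positivity of $d\beta$ throughout $\Sigma$, and the compatibility with $\phi$ modulo exact 1-forms --- and then verifying that, after adding the large multiple of $d\psi$ along the mapping torus, the resulting form genuinely restricts to $\alpha_{\mathrm{loc}}$ on $U$ and glues smoothly to the usual binding-neighbourhood model. The remaining verifications are routine.
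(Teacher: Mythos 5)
Your proposal is correct and follows essentially the same route as the paper: reduce to a trivial section fixed by the monodromy and run the Thurston--Winkelnkemper construction with a primitive $\beta$ of an area form whose dual (Liouville) vector field has an elliptic zero at $p$ and points outwards along $\partial\Sigma$; you simply spell out the local model and the existence of $\beta$ in more detail than the paper does. The only cosmetic slip is that the resulting form on $U$ is $C\,d\psi + r^2\,d\mu$ for a large constant $C$ rather than literally $\alpha_{\mathrm{loc}}$, which does not affect the characteristic foliation argument.
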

\begin{proof}
Fix a page $\Sigma = \Sigma_0$ of $(B,\pi)$ and let $N(B)$ denote a neighbourhood of the binding disjoint from $K$. Furthermore let $p = \Sigma \cap K$ denote the transverse intersecion of $K$ and $\Sigma$.
Identify the complement $M\setminus N(B)$ of $N(B)$ with the mapping torus $\Sigma(\phi)$ of a suitable monodromy map $\phi\co \Sigma \to \Sigma$, i.e.\ we have
\[
			M\setminus N(B) \cong \Sigma \times [0,1] / \sim,
\]
where we identify $(x,1)$ with $(\phi(x),1)$ for all $x \in \Sigma$. Note that we may assume that we have chosen $\phi$ in such a way that, with respect to the above identification, $\{ p \} \times [0,1]$ descends to $K$. Moreover we can assume $\phi$ to fix a little neighbourhood of $p$. Hence it remains to show that there is a contact structure on $\Sigma(\phi)$ satisfying the desired property for this particular case.
Following the construction of Thurston and Winkelnkemper~\cite{MR0375366} all we have to do is choose an exact volume form $d\beta$ on $\Sigma$ such that its dual vector field $Y$ points outwards along $\partial\Sigma$ and has an elliptic singularity at $p$.
\end{proof}
\begin{lem}
The contact manifold resulting from the (contact) fibre connected sum along a section
of an open book is compatible with the corresponding open book.
The analogous statement holds for multi-sections of open books as defined in
Section \ref{sec:multisections}.
\end{lem}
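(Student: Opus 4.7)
The plan is to establish compatibility by a purely local calculation around each section, reducing everything to a standard contact neighbourhood model.

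First I would apply Lemma~\ref{lem:sections are nice} to replace $\xi$ by a contact structure, still supported by $(B,\pi)$, for which each section $K_i$ meets every page at an elliptic singularity of the characteristic foliation. Combined with the standard neighbourhood theorem for positive transverse knots passing through such elliptic page-singularities, this provides coordinates $(\theta,r,\varphi)\in S^1\times D^2_\varepsilon$ centred on each $K_i$ in which $K_i=\{r=0\}$, the pages are the slices $\{\theta=\mathrm{const}\}$, and $\alpha=d\theta+r^2\,d\varphi$. In this model the compatibility conditions are manifest: $d\alpha=2r\,dr\wedge d\varphi$ is a positive area form on the disc pages, and the binding $B$ sits outside the neighbourhood, where nothing is altered.

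Next I would carry out the blow-up in this local model as defined in \S\ref{sec:def fibre sum}: replace the contact form over $\{\delta\le r\le\varepsilon\}$ by $d\theta+f(r)\,d\varphi$ with $f(\delta)=0$, $f'(\delta)=1$, and $f'>0$. The annular page pieces $\{\theta=\mathrm{const}\}\cap\{\delta\le r\le\varepsilon\}$ then carry the positive area form $f'(r)\,dr\wedge d\varphi$, so compatibility survives the blow-up on each piece. Along the boundary torus $\{r=\delta\}$ the contact form restricts to $d\theta$, and the Legendrian ruling is by the $\partial_\theta$-curves, which are precisely the framed push-offs of the sections.

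Then I would verify that the gluing $(x,y)\mapsto(-x,y)$ specified in \S\ref{sec:def fibre sum}, with $x$ the meridian ($\varphi$-direction) and $y$ the framing ($\theta$-direction), identifies $d\theta$ to $d\theta$ on the common torus; the two contact forms therefore glue to a smooth contact form $\alpha'$ on $\#_{K_0,K_1}(M,\xi)$, while the two annular page pieces are simultaneously identified to the cylinder $[-1,1]\times S^1$ appearing in the new page described before the lemma. The unchanged $\alpha$ near the binding, together with the local calculation above, yields $d\alpha'>0$ on every new page and $\alpha'>0$ on $B$, which is exactly compatibility. The multi-section case is handled identically, applied simultaneously at every pair of points of intersection.

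The hard part is the coordinate bookkeeping that ties the natural identification of the normal bundles of $K_0$ and $K_1$, used to set up the fibre sum, to the identification of the annular page pieces coming from the chosen framings of the sections. Getting the orientation conventions right is what guarantees that the glued surface really is the page with monodromy described just before the lemma, rather than some Dehn-twisted variant, and that the flow of $\alpha'$ tracks the new monodromy correctly across the surgery region.
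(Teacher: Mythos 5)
Your argument is correct in outline, but it takes a genuinely different route from the paper. The paper, after invoking Lemma~\ref{lem:sections are nice} exactly as you do, never touches the contact form explicitly: it observes that the elliptic singularity at the centre of each $D_i$ turns into a closed leaf of the characteristic foliation along the core of the new annulus $[-1,1]\times S^1$, with the foliation unchanged elsewhere, and then concludes compatibility by the criterion of \cite{MR2249250}*{Lemma 3.5} --- the contact planes of $\xi'$ can be isotoped arbitrarily close to the tangent planes of the pages while staying transverse to $B'$ and to the pages near $B'$. You instead verify the definition of compatibility head-on: normal form $d\theta+r^2\,d\varphi$ with flat pages $\{\theta=\mathrm{const}\}$, explicit blow-up to $d\theta+f(r)\,d\varphi$, the computation $f'(r)\,dr\wedge d\varphi>0$ on the annular page pieces, and matching of $d\theta$ across the gluing torus. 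Your route is more elementary and self-contained (no appeal to the external lemma, and it makes the positivity of $d\alpha'$ on the new pages completely explicit), at the price of needing the \emph{simultaneous} normalisation of the contact form and the pages near each $K_i$. That is the one step you should tighten: the transverse neighbourhood theorem alone normalises $\alpha$ but not the pages, and the statement of Lemma~\ref{lem:sections are nice} alone gives only an elliptic singularity, not the radial model. What actually delivers both at once is the Thurston--Winkelnkemper construction used in the \emph{proof} of that lemma, where one may take $\beta=\tfrac12 r^2\,d\varphi$ near each fixed point $p$, so that the resulting contact form is the model form in coordinates where the pages are the flat slices. With that reference made, your proof is complete as a proof of compatibility; the residual monodromy bookkeeping you flag at the end belongs to the discussion preceding the lemma rather than to the lemma itself, and the paper defers it there too.
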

\begin{proof}
According to Lemma~\ref{lem:sections are nice} We can assume, by applying an isotopy of $\xi$,
that the intersections of $K_i$ with the pages $\Sigma_\theta$ correspond to elliptic singularities of the characteristic foliation $(\Sigma_\theta)_\xi$. If we perform the fibre connected sum, the resulting contact structure $\xi'$ and the open
book $(\Sigma',\phi')$ are related as follows. As
observed in the above discussion the new fibre $\Sigma'$ is obtained by
replacing $D_0 \cup D_1 \subset \Sigma$ by $[-1,1] \times S^1$. Since the
origin $0 \in D_i$ corresponds to an elliptic singularity of the characteristic foliation
$(D_i)_\xi$ it gives rise to a closed leaf in the characteristic foliation
$([-1,1] \times S^1)_{\xi'}$ corresponding to the core $\{0\}\times S^1$ of the
annulus. Outside this curve the characteristic foliation agrees with the
foliations on $D_i \setminus \{0\}$.
\parskip 0pt

The new contact structure $\xi'$ can be isotoped to be arbitrarily close
(as oriented plane fields), on compact subsets of the pages, to the tangent
planes to the pages of the open book in such a way, that after some point
in the isotopy the contact planes are transverse to $B'$ and transverse to the
pages of the open book in a fixed neighbourhood of $B'$ (because this holds for 
the original open book $(\Sigma,\phi)$). Hence, according to
\cite{MR2249250}*{Lemma 3.5}, the contact structure $\xi'$ is supported by the
open book $(\Sigma',\phi')$.
\end{proof}
\begin{rem*}
An open book decomposition can be understood as the
boundary of an \textit{achiral Lefschetz fibration}. In a similar fashion the
fibre sum along sections corresponds to the boundary of a \textit{broken achiral Lefschetz
fibration}, see \cite{MR2350472} for reference.
\end{rem*}
Given a simple closed curve $\alpha$ on 
some surface $\Sigma$ we denote by $\tau_\alpha$ and $(\tau_\alpha)^{-1}$ respectively the right- and left-handed Dehn twist along $\alpha$.  When we deal with the 
concatenation of Dehn twists we sometimes omit the concatenation symbol ``$\circ$'' to simplify the notation. In this fashion it makes sense to consider $n$th-powers $(\tau_\alpha)^n$ of Dehn twists, where the zero power $(\tau_\alpha)^0$ is defined to be the identity map.
\parskip 0pt

Let $K \subset \partial\Sigma$ denote a boundary component of the page $\Sigma$ provided with some framing $m \in \Z$ and let $K' \subset \Sigma$ denote the transverse knot indicated by the black dot in Figure~\ref{fig:navel} which we understand $K'$ as zero-framed. The framings are understood as measured with respect to their respective natural zero-framings as explained above.
\begin{defn}
We will refer to $(K,m)$ as
\textbf{admitting a navel} if the monodromy near the boundary component is given
by $\tau_\alpha\tau_\beta^{-1}\tau_\gamma^{m-1}$, where the curves $\alpha,
\beta, \gamma \subset \Sigma$ are given as in Figure~\ref{fig:navel}. The transverse knot $K'
= (K',0)$ indicated by the black dot in Figure~\ref{fig:navel} will be referred
to as \textbf{core of the navel} corresponding to $(K,m)$.
\end{defn}
Observe that we can change every boundary component into a navel, since the monodromy
$\tau_\alpha\tau_\beta^{-1}\tau_\gamma^{m-1}$ is isotopic to the identity.
In the following proposition we will express the binding sum as the fibre
sum along the core of its corresponding navel.
\begin{figure}[h] 
\center
\includegraphics{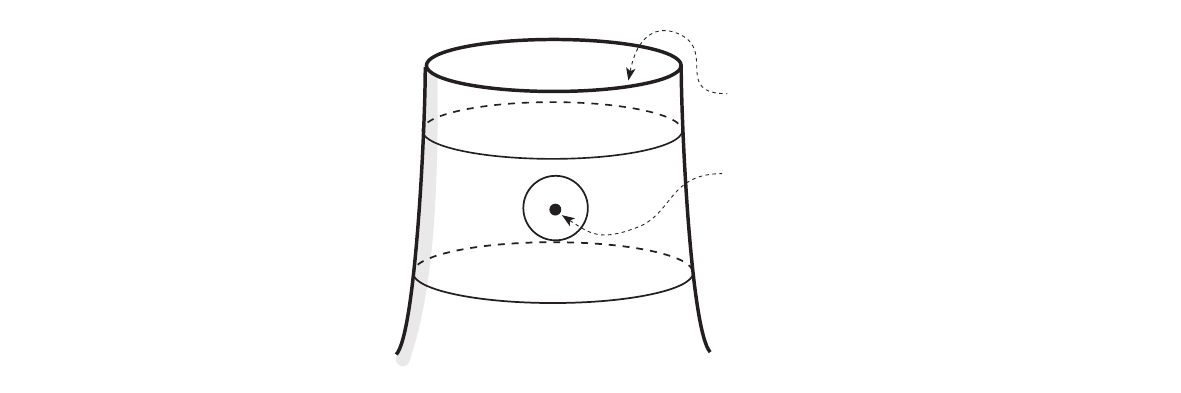}
\put(-130,82){$K$}
\put(-130,60){$K^\prime$}
\put(-230,70){$\alpha$}
\put(-234,30){$\beta$}
\put(-200,50){$\gamma$}
\caption{Binding component admitting a navel.}
\label{fig:navel}
\end{figure}
\begin{thm}
\label{prop:open book supporting the binding sum}
Let $K \subset \partial\Sigma$ be a binding component provided with a framing
$m \in \Z$. Then the framed knot $(K,m)$ is transversely isotopic to the
corresponding core $(K',0)$ of its navel. In consequence the result of
performing the binding sum with framing $m \in \mathbb{Z}$ along two binding
components $K_0,K_1\subset \Sigma$ corresponds to the fibre sum along the cores
$K'_0,K'_1$ of their corresponding navels (cf.\ also Figure~\ref{fig:binding sum}).
\end{thm}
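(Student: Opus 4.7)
The plan is to first establish that $(K,m)$ and $(K',0)$ are isotopic as framed transverse knots in $(M,\xi)$, and then deduce the statement about the binding sum from the fact that a contact fibre sum depends only on the transverse isotopy class of the framed knots involved, combined with the preceding lemma on fibre sums along sections of open books.

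For the transverse isotopy, the first step is to exploit the flexibility in choosing a representative of the abstract open book. Since, as observed immediately after the definition of a navel, $\tau_\alpha\tau_\beta^{-1}\tau_\gamma^{m-1}$ is isotopic to the identity relative to the boundary of the navel region, I may compose the monodromy with this map in a collar of $K$ without changing the abstract open book, and hence without changing the supported contact manifold. In the new description the point $K' \in \Sigma$ is a fixed point of the monodromy, so it defines a trivial section of the open book; by Lemma~\ref{lem:sections are nice} I may further isotope the contact structure (within the class supported by $(\Sigma,\phi)$) so that $K'$ becomes a positively transverse knot, realised as an elliptic singularity of the characteristic foliation on each page. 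Next, I choose a properly embedded arc in the navel region of $\Sigma$ running from the boundary $K$ to the interior point $K'$; spinning this arc around the $S^1$-direction of the mapping torus produces an embedded annulus $A \subset M$ with $\partial A = K \sqcup K'$. Sliding $K$ along $A$ gives a topological isotopy to $K'$, and a $C^0$-small perturbation makes every intermediate curve positively transverse to $\xi$.

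The main obstacle will be checking that the framings line up. The framing on $K$ is the page framing shifted by $m$, while the framing on $K'$ is zero with respect to the trivialisation induced by a nearby parallel section. As one traverses the annulus $A$, the longitudinal framing rotates by an amount equal to the total algebraic intersection of a push-off with the Dehn twist curves appearing in the navel monodromy. The factor $\tau_\gamma^{m-1}$ should contribute $m-1$ twists, since $\gamma$ plays the role of a meridian about $K'$, while $\tau_\alpha\tau_\beta^{-1}$ is designed to contribute exactly one further twist; adding these yields the required total discrepancy of $m$, identifying $(K,m)$ with $(K',0)$ as framed transverse knots. Verifying these two contributions carefully using the explicit picture of the navel region is the delicate point of the proof.

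For the second statement, applying the first statement to each binding component separately (with the appropriate choice of navel — framing $m$ on $K_0$ and framing $0$ on $K_1$) transversely isotopes $(K_0,m)$ to $(K'_0,0)$ and $(K_1,0)$ to $(K'_1,0)$, and thereby identifies $\boxplus_m(\Sigma,\phi)$ with the contact fibre sum along the sections $K'_0, K'_1$ endowed with framing zero. The latter is exactly the kind of fibre sum treated earlier in this section: the preceding lemma exhibits the induced abstract open book as compatible with the resulting contact structure, which completes the proof.
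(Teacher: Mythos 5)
Your overall architecture agrees with the paper's: reduce the binding sum to a fibre sum along the trivial sections $K'_0,K'_1$ by showing $(K,m)$ and $(K',0)$ are transversely isotopic, then invoke the earlier lemma on fibre sums along sections. However, the central step is not actually established. The assertion that sliding $K$ to $K'$ along an annulus gives a topological isotopy which ``a $C^0$-small perturbation makes \ldots positively transverse'' is false as a general principle: positive transverse isotopy is strictly finer than topological isotopy (the self-linking number, equivalently the framing discrepancy you are trying to control, is an invariant of the former but not of the latter, as stabilisation of transverse knots shows). One cannot upgrade a topological isotopy with transverse endpoints to a transverse isotopy by perturbation rel endpoints; producing the transverse isotopy is precisely the content of the theorem and is exactly where the $\tau_\alpha\tau_\beta^{-1}$ part of the navel does its work. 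The paper handles this in the model neighbourhood $S^1\times[\delta,\varepsilon]\times S^1$: the mapping torus of the explicit shear $\phi(\theta,r)=(\theta+h(r),r)$ realising $\tau_\alpha\tau_\beta^{-1}$ is foliated by tori $T_r$ whose characteristic foliations have slope $-1/r^2$, and one writes down a family of curves $K_s\subset T_s$ with $\dot c_s\geq 0$ interpolating between the transverse push-off $K^+$ (slope $+1$ on $T_\delta$) and the core $K'$ (slope $\infty$); transversality holds because the slopes never hit $-1/s^2$. Note that without the shear the push-off and the core are not even homologous in the product region, so no amount of perturbation produces the isotopy.

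A second, related gap: the framing computation, which you yourself identify as ``the delicate point,'' is deferred rather than carried out, and the proposed accounting is stated as what ``should'' happen. It also does not match the paper's bookkeeping: there the unit discrepancy is contributed by passing from $K$ to its transverse push-off $K^+$ (framing $m\mapsto m-1$), the explicit isotopy through the $T_s$ changes the framing not at all, and $\tau_\gamma^{m-1}$ then removes the remaining $m-1$; attributing a $+1$ to $\tau_\alpha\tau_\beta^{-1}$ via intersection numbers is at best an unverified reformulation. Finally, a minor but symptomatic point: the ``spun'' annulus does not close up as described, since a generic arc from $K$ to $K'$ meets $\alpha$, $\beta$, $\gamma$ and is therefore not invariant under the navel monodromy; one must use the isotopy of $\tau_\alpha\tau_\beta^{-1}\tau_\gamma^{m-1}$ to the identity to close it, and the winding of that isotopy is again exactly the quantity you have left uncomputed.
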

\begin{figure}[h] 
\center
\includegraphics{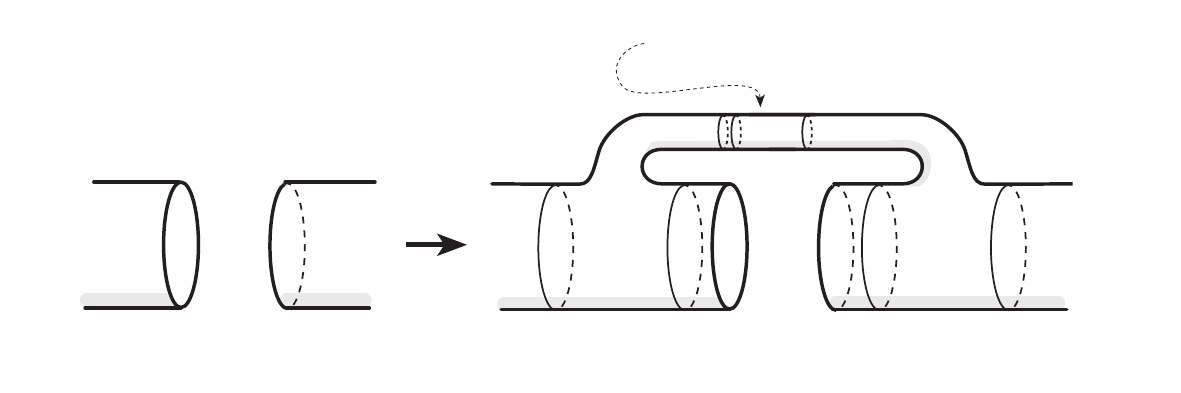}
\put(-280,37){$\boxplus_m$}
\put(-124,75){$\dots$}
\put(-149,106){\textit{\small{$|m-2|$ Dehn twists of sign $\pm$,}}}
\put(-149,96){\textit{\small{depending on the sign of $(m-2)$}}}
\put(-147,14){$+$}
\put(-184,14){$-$}
\put(-90,14){$+$}
\put(-55,14){$-$}
\caption{An open book supporting the binding sum.}
\label{fig:binding sum}
\end{figure}
\begin{proof}
Identify a neighbourhood of the binding component $K\subset \partial\Sigma$ with an $\varepsilon$-neighbourhood $N_\varepsilon \subset S^1 \times \R^2$, where $K = S^1 \times \{0\}$. Then, with $S^1$-coordinate $\theta$, polar
coordinates $(r,\varphi)$ on $\R^2$, and for a suitable $\varepsilon > 0$, the contact structure
\[
			d\theta + r^2 \thinspace d\varphi=0
\]
provides a model for the above neighbourhood of $K$. Moreover we can assume
that over this neighbourhood the pages are given by the
preimages of the projection on the angular-coordinate $\varphi$, i.e.~the
closure of every page can be described as $A_\varphi = S^1  \times [0,\varepsilon] \times
\{\varphi\}$ for some appropriate $\varphi \in S^1$.
\parskip 0pt

We will now apply the first part $\tau_\alpha \tau_\beta^{-1}$ of the monodromy
of the navel (the twists $\tau_\gamma^{m-1}$ just take care of the framings,
but we will come to that later). 
Let $S^1 \times [\delta,\varepsilon] \times S^1$ denote the complement of a $\delta$-neighbourhood $N_\delta$ of $K$ in $N_\varepsilon$ for some small $0 < \delta < \varepsilon$.
Consider the map $\phi \co S^1 \times
[\delta,\varepsilon] \to S^1 \times [\delta,\varepsilon]$ defined by
\[
					\phi(\theta,r) := (\theta + \thinspace h(r),r),
\]
where $h:[\delta,\varepsilon] \to [0,1]$ is the function satisfying the following properties:
\begin{list}{$\bullet$}{}
	\item $h(r) = 0$ for $r$ near $\delta$ and near $\varepsilon$,
	\item $h(r) = 1$ on an interval containing $\frac{\varepsilon + \delta}{2}$,
	\item $h'(r) \geq 0$ for $r< \frac{\varepsilon + \delta}{2}$ and
	\item $h'(r) \leq 0$ for $r> \frac{\varepsilon + \delta}{2}$.
\end{list}
Note that with respect to the identification $S^1 \equiv \R/\Z$ the map $\phi$
is indeed well-defined and observe that $\phi$ equals $\tau_\alpha \tau_\beta^{-1}$
and is isotopic to the identity. Consider the corresponding mapping torus
$A(\phi)$, that is
\[
				A(\phi)  = \big( S^1 \times [\delta,\varepsilon] \times [0,1] \big) / \sim_{\phi},
\]
where we identify $(\theta + h(r),r,1)$ with $(\theta + h(r),r,0)$ for each
$(\theta,r) \in S^1 \times [\delta,\varepsilon]$. Following the construction of
Thurston-Winkelnkemper \cite{MR0375366} we can endow $A(\phi)$ with the
contact structure $\xi'$ given by the kernel of the contact $1$-form
\[
		 \big( (1-\varphi)\thinspace d\theta + \varphi\thinspace \phi^* d\theta \big) + r^2\thinspace d\varphi
\]
(actually this defines a contact structure on $S^1 \times [\delta,\varepsilon] \times
[0,1]$ that descends to a contact structure on $A(\phi)$).
Observe that, since $\phi$ is isotopic to the identity, $(A(\phi),\xi')$ and $(S^1 \times [\delta,\varepsilon] \times S^1,\xi)$ are contactomorphic under a contactomorphism keeping little neighbourhoods of
the boundary fixed.
The space $A(\phi)$ is foliated by tori $T_r$ of the form
\begin{equation}
\label{eqn:T_r}
				T_r = \big( S^1 \times \{ r \} \times [0,1]  \big) /_{\sim_{\phi}},
\end{equation}
where we identify $(\theta,r,1)$ with $(\theta + h(r),r,0)$ for each $\theta
\in S^1$. We can also understand these tori as the quotient of $\R^2$ and the
lattice spanned by $(1,0)$ and $(h(r),1)$ (in the same manner as we understand
$S^1 \times S^1$ as $\R^2 /\Z^2$). The characteristic foliation $(T_r)_\xi$ of
each torus is given by linear curves of slope $s(T_r) = -\frac{1}{r^2}$, where
we measure the slope with respect to the identification as above. Hence any
closed curve $c$ on $T_r$ describes a transverse knot as long as the slope of
$\dot c$ does not equal $-\frac{1}{r^2}$. Now let $K^+$ be the positive,
transverse push-off of $K$, i.e.~$K ^+$ is a linear curve on $T_\delta$ of slope
$+1$.
\parskip 0pt

We are now going to define an isotopy $K_s$ of transverse knots connecting
$K^+$ with the core of the navel $K'$. For $s\in[\delta,\frac{\varepsilon + \delta}{2}]$
let $c_s:[0,1] \to [0,1]\times [0,1]$ denote the family of embedded curves with
the following properties:
\begin{list}{$\bullet$}{}
	\item $c_s(0)=(h(s),0)$,
	\item $c_s(1)=(1,1)$,
	\item $\dot c_s\geq0$ and
	\item $\dot c_s(0) = \dot c_s(1) = \pm\infty$.
\end{list}
Observe that each of the curves $c_s$ gives rise to a closed curve $K_s$ on
$T_s$ (cf.~Equation (\ref{eqn:T_r}) above). In particular these knots are
transverse, since we have $\dot c_s \geq 0 > s(T_s)$. Furthermore we have
$K_\delta = K^+$ and $K_{\frac{\varepsilon + \delta}{2}} = K'$.
\parskip 0pt

Let us see what happens to the framing of the knots. Assume that the initial
framing of $K$ was $m$. Observe that the framing of the
transverse push-off $K^+$ with respect to $T_1$ is given by $m-1$. 
The isotopy of knots $K^+_s$ does not change the framing at all. Hence
at this point we constructed a transverse isotopy connecting $(K,m)$
and $(K',m-1)$.
Since the slope of $K'$ equals $\infty$ we can apply the twists
$\tau_\gamma^{m-1}$ around $K'$ such that we end up with a zero-framed knot $K'$ and we are done.
\end{proof}
\begin{ex*}
Consider two copies of the open book $(D^2, \id)$ supporting $S^3$ with
the standard contact structure $\xi_{st}$. It is easy to see that the
result of the fibre connected sum along the only binding components yields
$S^1 \times S^2$ with its standard contact structure. Using the
description of a compatible open book for the binding sum in Theorem~\ref{prop:open book supporting the binding sum} we obtain the standard open
book description of $S^1 \times S^2$ given by an annulus with trivial monodromy.
\end{ex*}
%
%
\subsection{Applications}
%
%
We finish the first part of the present paper with a few applications of the open book description given in Theorem~\ref{prop:open book supporting the binding sum}.
%
%
\subsubsection{Tight contact structures on $T^3$}
\label{sec:open books on torus}
%
%
Let $(\theta_1, \theta_2, \varphi)$ denote coordinates on the $3$-dimensional
torus $T^3$ and consider the tight contact structure $\xi_n$ given by the
kernel of the contact form $\cos(n \varphi)\thinspace
d{\theta_1} + \sin(n \varphi) \thinspace d{\theta_2}$. The contact structures $\xi_n$ provide a
complete list of tight contact structures on $T^3$ (cf.\ \cite{MR1487723})and can also be
described in the following way. Take $2n$ copies of the open book $(S^1 \times
[0,1],\id)$, which is an open book compatible with the standard 
contact structure on $S^1 \times S^2$, and then perform the $2n$-fold binding
sum in the obvious way. Now using Theorem~\ref{prop:open book supporting
the binding sum} we are able to translate the above construction of $(T^3,\xi_n)$ 
into compatible open books.
These open books for $(T^3,\xi_n)$ were first computed by van Horn-Morris~\cite{vanHornMorris-Thesis} using different methods than the ones presented
here.
%
%
\subsubsection{Full Lutz twist along binding component}
\label{sec:open book for lutz}
%
%
Consecutively performing the binding sum with
two copies of the open book $(S^1 \times [0,1],\id)$ has the effect of a full Lutz twist 
along the binding component. Again using Theorem~\ref{prop:open book supporting the binding sum}
we are able to compute a compatible open book. 
One can show that this open book is stably
equivalent to the compatible open book constructed in \cite{2009arXiv0905.0986O}.
Obviously we can compute the effect of a regular Lutz twist in the same fashion.
%
\subsubsection{Recovering Giroux torsion}
%
Let $(M,\xi)$ be a contact-$3$-manifold with non-zero Giroux torsion, i.e.~if
we choose $(\theta_1,\theta_2)$ to be coordinates on $T^2$ there exists an
embedding of the contact manifold
\[
			 \Big(T^2 \times [0,2\pi],\xi_{2\pi} = \ker\big(\cos( t)\thinspace
			d{\theta_1} + \sin( t) \thinspace d{\theta_2}\big)\Big)
\]
into $(M,\xi)$.
So far, there was no way to recover Giroux-torsion in
the language of open books. We approach this question by computing a certain 
compatible open book for $(M,\xi)$.
\parskip 0pt

Consider the complement $(M,\xi) \setminus \big(T^2 \times [0,2\pi],\xi_1
\big)$ of the Giroux-domain in $(M,\xi)$.
The boundary of $(M,\xi) \setminus \big(T^2 \times [0,2\pi],\xi_1 \big)$
consists of two pre-Lagrangian tori which are foliated by an $S^1$-family of
closed curves. Collapsing these tori, in the sense of \S\ref{sec:def fibre sum}, gives rise to a new closed contact manifold $(M',\xi')$
with two distinguished transverse knots $K_0$ and $K_1$. In this particular
case we decorate these knots with the framing corresponding to the $\theta_1$
coordinate. Let $(\Sigma',\phi')$ denote a compatible open book decomposition
of $(M',\xi')$ such that $K_0$ and $K_1$ are part of the binding
$\partial\Sigma'$. Let $m,n \in \Z$ be the above framings (induced by
$\theta_1$) expressed with respect to the page-framing induced by the open book
$(\Sigma',\phi')$.
\parskip 0pt

Observe that if we take two copies of the open book $(S^1\times[0,1],\id)$, perform the binding sum along $S^1 \times \{0\}$ in each copy of $(S^1\times[0,1],\id)$ and blow up the two remaining components corresponding to $S^1 \times \{1\}$ in each copy we end up with $\big(T^2 \times [0,2\pi],\xi_{2\pi} \big)$. Hence performing the $2$-fold binding sum of $(\Sigma',\phi')$ with $(S^1\times[0,1],\id) \boxplus (S^1\times[0,1],\id)$ along $K_0$ and the first copy of $S^1 \times \{1\}$ and along $K_1$ and the second copy of $S^1 \times \{1\}$ actually gives us a description of $(M,\xi)$ which, using Theorem~\ref{prop:open book supporting the binding sum}, may be translated into an open book.
%
%
\subsubsection{Surface bundles with invariant dividing set}
%
Let $\Sigma$ be a closed oriented surface containing a collection $\Gamma \subset \Sigma$ of oriented, mutually disjoint, embedded circles. Suppose there is a choice of orientations on the regions $\Sigma \setminus \Gamma$ which is coherent with the orientation of $\Gamma$. Let $\Sigma^+$ and $\Sigma^-$ denote the collections of positive and negative oriented regions of $\Sigma\setminus\Gamma$. We will refer to $(\Sigma,\Gamma)$ as an \textbf{abstract convex surface}. Let $\phi\co \Sigma \to \Sigma$ denote a diffeomorphism that restricts to the identity in a neighbourhood $N(\Gamma)$ of the abstract dividing set $\Gamma$. Write $\pi$ for the projection from the mapping torus $\Sigma(\phi)$ to the circle. Note that there is a natural contact structure $\xi_\Gamma$, such that for each $\theta \in S^1$ the fibre $\pi^{-1}(\theta) \cong \Sigma$ is a convex surface with dividing set $\Gamma$. 
\parskip 0pt

Write $\phi^\pm$ for the restrictions of $\phi$ to $\Sigma^\pm$. Then we may identify the contact manifold $(\Sigma(\phi),\xi_\Gamma)$ with the fibre sum of $(\Sigma^+,\phi^+)$ and $(\Sigma^-,\phi^-)$ (where the latter objects are understood as open books), i.e.\ we have
\[
	(\Sigma(\phi),\xi_\Gamma) = (\Sigma^+,\phi^+) \boxplus (\Sigma^-,\phi^-).
\]
With the help of Theorem~\ref{prop:open book supporting the binding sum} this identification may be translated into an open book.
%
%
\section{Fibre connected sum along multi-sections}
\label{sec:multisections}
%
%
In this section we try to approach the following question. Assume we are given
two knots $K_0$ and $K_1$ in the $3$-dimensional sphere $S^3$ which are
braided over the unkot $U \subset S^3$. Furthermore we assume the knots to have the same
braid index, $n \in \mathbb{N}$ say. Now recall the standard open book
description of $S^3$ with binding the unknot and pages diffeomorphic to the
$2$-disc and note that each of the knots $K_0$ and $K_1$ provide an
\textbf{$n$-fold section of of the open book}, i.e. each of the knots
intersects every page transversely and exactly $n$ times. A representation of the knots
$K_0$ and $K_1$ as braids endows them with a natural framing given 
by the the blackboard framing. Taking two copies of $(D^2,\id)$ we
can perform the fibre connected sum along $K_0$ and $K_1$ and ask for a
description of the resulting open book
\begin{equation}
\label{eqn:multisum}
			(\Sigma,\phi) := (D^2,\id) \#_{K_0, K_1} (D^2,\id).
\end{equation}
Obviously the page $\Sigma$ will be the $n$-fold connected sum of the two
original pages. However it is not clear what the monodromy $\phi$ looks like.
This question will be settled in the following two subsections.
\parskip 0pt

We assume that the reader is familiar with the basic notions of braid theory. For 
a brief introduction we point the reader to \cite{MR1414898}.
%
\subsection{Monodromy corresponding to a pair of crossings}
%
Before we dive into the description of $(\Sigma,\phi)$ we first
set up some notation and define a {relative version of the
fibre connected sum}. For a,
not necessarily connected, manifold $M$ with non-empty boundary $\partial M$
and two collections of properly embedded, oriented, framed arcs
$\boldsymbol a = \{ a_1,\ldots, a_k \}$ and $\boldsymbol a' = \{ a'_1,\ldots,a'_k \}$ with neighbourhoods
$N_{\boldsymbol a}$ and $N_{\boldsymbol a'}$ we denote by $\#_{\boldsymbol a}M$ the manifold
\[
	\#_{\boldsymbol a}M := \big( M \setminus (N_{\boldsymbol a} \cup N_{\boldsymbol a'}) \big)
	/_{ _{\partial N_{\boldsymbol a}} \sim_{ \partial N_{\boldsymbol a'}} },
\]
where we identify as follows: for $i=1,\ldots,k$ the framing together with the orientation induce identifications of both components $N_{a_i} \subset N_{\boldsymbol a}$ and $N_{a_i'} \subset N_{\boldsymbol a'}$ with $[0,1] \times D^2$. Now we identify $(t,\theta) \in [0,1]\times \partial D^2 \subset \partial N_{a_i}$ with $(t,-\theta) \in [0,1]\times \partial D^2 \subset \partial N_{a_i'}$.
\begin{figure}[h] 
\center
\includegraphics{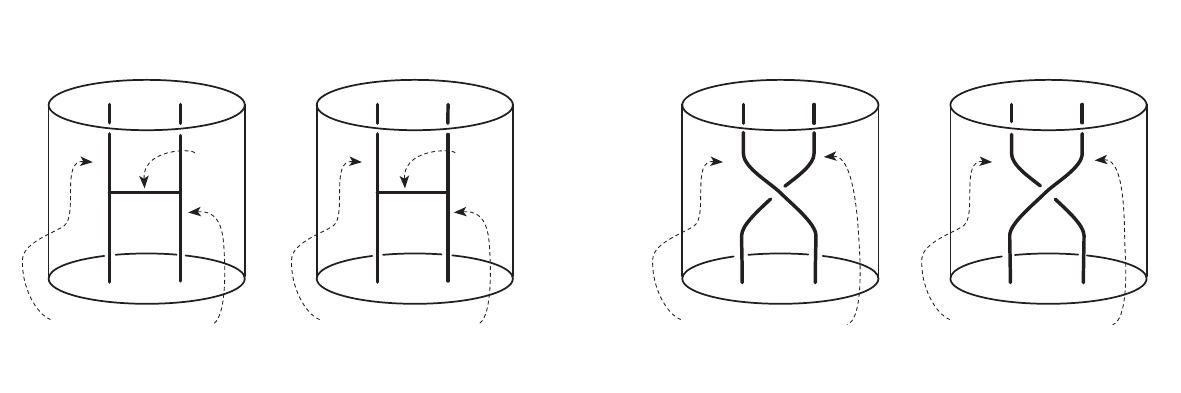}
\put (-264,57) {$\sqcup$}
\put (-82,57) {$\sqcup$}
\put (-323,15){$a_1$} \put (-292,15){$a_2$}
\put (-281,66){$c$}
\put (-243,15){$a^\prime_1$}
\put (-214,15){$a^\prime_2$}
\put (-205,66){$c^\prime$}
\put (-137,15){$b_1$}
\put (-108,15){$b_2$}
\put (-60,15){$b^\prime_1$}
\put (-30,15){$b^\prime_2$}
\caption{The arcs $\boldsymbol a,\boldsymbol a', \boldsymbol b,\boldsymbol b', c,c' \subset \big( D
\times [0,1] \big) \sqcup \big( D' \times [0,1] \big)$.}
\label{fig:arcs a b}
\end{figure}
\parskip 0pt

From now on let $M$ be the disjoint union of $D \times [0,1]$ and $D' \times
[0,1]$, in symbols
\[
		M = \big( D \times [0,1] \big) \sqcup \big( D' \times [0,1] \big),
\]
where $D$ and $D'$ respectively denote a copy of the $2$-disc
$D^2$.  
Consider the two sets of properly embedded, framed (by the blackboard-framing)
arcs $\boldsymbol a,\boldsymbol a', \boldsymbol b, \boldsymbol b'\subset M$ indicated in Figure~\ref{fig:arcs a
b}. Understand these arcs as oriented upwards and consider the corresponding manifolds $\#_{\boldsymbol a}M$ and $\#_{\boldsymbol b}M$. Furthermore let $K,K' \subset \#_{\boldsymbol a}M$ denote the framed knots indicated in Figure~\ref{fig:K and K'}. The
framings are measured with respect to $\Sigma' \subset \#_{\boldsymbol a}M$,
the genus-$1$ surface with two boundary components obtained by the $2$-fold
connected sum of $D$ and $D'$. Observe that $\#_{\boldsymbol a}M$ is naturally diffeomorphic to $\Sigma' \times [0,1]$.
\begin{figure}[h] 
\center
\includegraphics{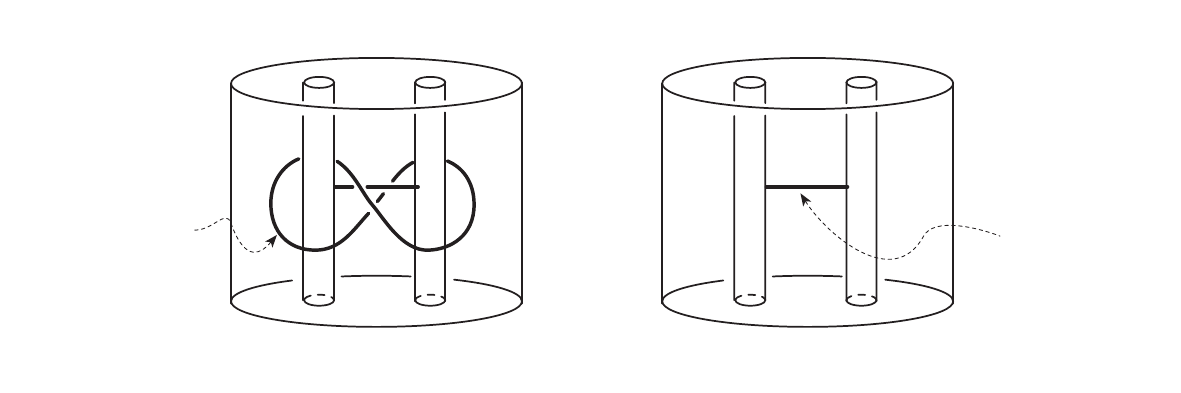}
\put (-53,37){{$(K,0)$}}
\put (-323,37){{$(K',+1)$}}
\put (-180,57){$\sim$}
\caption{The knots $K$ and $K'$ sitting in $\#_{\boldsymbol a}M$. Framings are
measured with respect to $\Sigma' \subset \#_{\boldsymbol a}M$.}
\label{fig:K and K'}
\end{figure}
\begin{lem}
\label{lem:aM to bM}
Denote by $(\#_{\boldsymbol a}M)(K,K')$ the result of surgery along $K_0,K_1$
with respect to their framings (cf.~Figure~\ref{fig:K and K'}). Then we have
\[
			 \#_{\boldsymbol b}M  \cong 	( \#_{\boldsymbol a}M ) (K,K').
\]\end{lem}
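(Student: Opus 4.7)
The plan is to reduce the statement to a local analysis around the crossing of the $\boldsymbol b$-arcs and then recognise the resulting local modification of $\#_{\boldsymbol a}M$ as the indicated Dehn surgery. Outside a compact neighbourhood $V \subset M$ of this crossing, the tangles $\boldsymbol a \cup \boldsymbol a'$ and $\boldsymbol b \cup \boldsymbol b'$ coincide. Therefore $\#_{\boldsymbol a}M$ and $\#_{\boldsymbol b}M$ can be arranged to agree over the complement of $V$, and it suffices to compare their restrictions to $V$.

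Next I would construct an explicit diffeomorphism between the two local pieces modulo surgery. Inside $V$, an ambient half-twist isotopy brings the crossing $\boldsymbol b$-configuration to the straight $\boldsymbol a$-configuration, at the price of modifying the gluing map on the tube-annuli by a corresponding twist homeomorphism. By a standard cut-and-paste argument, such a twisted regluing along a torus or annulus boundary is equivalent to a Dehn surgery on a small link inside the straight model: one meridional curve encircling the crossing is responsible for the half-twist itself, while a second curve is responsible for the way the endpoints of the two strands get interchanged after the connected-sum identification. A careful picture in the local model identifies this link with $K \sqcup K'$.

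Finally I would verify the surgery coefficients. Using the natural identification $\#_{\boldsymbol a}M \cong \Sigma'\times[0,1]$, the page framing from $\Sigma'$ coincides with the blackboard framing induced by the arcs, so the framings in Figure~\ref{fig:K and K'} can be read off directly from the explicit local model. A computation then gives coefficient $0$ on $K$, which can be pushed into a single page $\Sigma'\times\{t\}$, and coefficient $+1$ on $K'$, which picks up a single positive twist from the half-twist isotopy.

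The main obstacle will be the bookkeeping of signs: to verify that the framing on $K'$ comes out as $+1$ rather than $-1$, one must carefully track the orientation-reversing identification $(t,\theta)\mapsto(t,-\theta)$ from the definition of the fibre connected sum together with the blackboard framings used in the gluing, and see how these interact with the half-twist inside $V$. Once that sign has been pinned down, the rest of the argument is a routine application of cut-and-paste in dimension three.
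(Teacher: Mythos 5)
Your overall strategy --- localise near the crossing and interpret the change of gluing as a surgery --- is reasonable in spirit, but the central step is asserted rather than carried out, and the picture you give of the surgery link does not match the knots $K$ and $K'$ of the statement. The passage from the straight arcs $\boldsymbol a$ to the crossed arcs $\boldsymbol b$ changes the permutation that the tangle induces on its endpoints, and no Dehn surgery on a link lying in the complement of the arcs inside your ball $V$ can achieve that, since such a surgery leaves the tangle itself untouched. In particular a ``meridional curve encircling the crossing'' cannot be responsible for the half-twist: $\pm 1$-surgery on a meridian of two parallel strands inserts a full twist (two crossings), never a single one. The actual surgery curves are of a different nature: $K$ descends from the arcs $c \cup c'$ joining $a_1$ to $a_2$ and $a_1'$ to $a_2'$, so it runs through both connect-sum tubes and through both pieces $D \times [0,1]$ and $D' \times [0,1]$; likewise $K'$ traverses the gluing region --- this is why the framings of both knots can be measured against the page $\Sigma'$ in the first place. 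Neither is a small circle near the crossing, so the ``careful picture in the local model'' that is supposed to identify your link with $K \sqcup K'$ is precisely the content that is missing.

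Moreover, the two surgeries do not decouple into ``one for the half-twist, one for the endpoint interchange''; they interact with the gluing region in an essential way. The paper's argument makes this precise: the $0$-surgery on $K$ replaces the gluing along the two tube boundaries $\partial N_{a_1}, \partial N_{a_2}$ by a single gluing along the boundary of a genus-$2$ handlebody $H$, a neighbourhood of the theta-graph $(a_1 \cup a_2) \cup c$, identified with its mirror $H'$; the $+1$-surgery on $K'$, after isotoping $K'$ onto $\partial H$ where its given framing agrees with the surface framing, then re-splits $\partial H$ into the two tubes around the crossed arcs $\boldsymbol b$. Until you either reproduce this two-stage mechanism or give an independent, explicit identification of the surgery link together with its framings, the proposal has a genuine gap at its main step; the sign bookkeeping you flag at the end is a secondary issue by comparison.
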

\begin{proof}
Let us give an explicit description of $M = \big( D \times [0,1] \big)
\sqcup \big( D' \times [0,1] \big)$ embedded in $\R^3$ with coordinates
$(x,y,z)$: understand $D$ and $D'$ as unit $2$-discs in the $xy$-plane centred at
the points $(0,2)$ and $(0,-2)$. Identify $\boldsymbol a, \boldsymbol a'$ with
$\{ (\pm\frac{1}{2} , 2)\} \times [0,1]$ and $\{ (\pm\frac{1}{2} , -2)\} \times
[0,1]$. Denote by $c,c'$ the arcs given as $[-\frac{1}{2},\frac{1}{2}] \times
\{ \pm 2 \} \times \{\frac{1}{2}\}$. Then
$\#_{\boldsymbol a}M$ is given as the quotient 
\[  
 	 \Big( \big( D \times [0,1] \big) \setminus
	 N_{\boldsymbol a} \Big) \cup \Big( \big( D' \times [0,1] \big) \setminus
	 N_{\boldsymbol a'} \Big) / \sim_{\partial N_{\boldsymbol a}} ,
\] 
where we identify points
$(x,y,z) \in \partial N_{\boldsymbol a}$ with their mirror image $(x,y,-z) \in
\partial N_{\boldsymbol a'}$. Note that the pair of arcs $c,c'$ descends to the
closed curve $K \subset \#_{\boldsymbol a}M$.
\parskip 0pt

Denote by $H \subset M$ a neighbourhood of the graph $(a_1 \cup a_2) \cup c$. 
Choose $H'$ to be the reflection of $H$ with
respect to the $xz$-plane and note that $H'$ provides a neighbourhood of the graph $(a'_1 \cup a'_2 ) \cup c'$. Observe that the result $( \#_{\boldsymbol a}M )
(K)$ of zero-surgery along $K$ is given by
\[
	\Big( \big( D \times [0,1] \big) \setminus H
	\Big) \cup \Big( \big( D' \times [0,1] \big) \setminus H' \Big) /
	\sim_{\partial H} ,
\] 
where we identify a point $(x,y,z) \in \partial H$ with its mirror $(x,-y,z)
\in \partial H'$.
\parskip 0pt

Let us now perform the surgery along $K'$. Isotope $K'$ such that it lies on
$\partial H$ sitting inside of $\#_{\boldsymbol a}M$. Note that
the framing of $K'$ and the framing induced by $\partial H$ agree.
Let $\nu K' = (-\varepsilon,\varepsilon) \times S^1$ denote a small open
neighbourhood of $K'$ in $\partial H$. Remove a
neighbourhood $N_{K'} \subset ( \#_{\boldsymbol a}M ) (K) $ of $K'$ and
observe that topologically the complement of $N_{K'}$ is given by
\begin{equation}
\label{eqn:complement of K_1}
	\Big( \big( D \times [0,1]
	\big) \setminus H \Big) \cup \Big( \big( D' \times [0,1] \big) \setminus H' \Big) / \sim_{\partial H \setminus \nu K'} ,
\end{equation}
where we just identify points $(x,y,z) \in \partial H \setminus \nu K'$ with
their mirror $(x,-y,z) \in \partial H'$. We will glue back the surgery torus
$S^1 \times D^2$ in two steps. Take $[0,\pi] \times
D^2 \subset S^1 \times D^2$ (where we identify $S^1 \equiv \R/2\pi\Z$) and
attach it along $[0,\pi] \times \partial D^2$ to the closure of the neighbourhood
$\nu K' \subset \partial H$, which we identify with $[-\varepsilon,\varepsilon]
\times S^1$.
Simultaneously attach $[\pi,2\pi] \times D^2$
along  $[\pi,2\pi] \times \partial D^2$ to the mirror image of $\nu K'$ on
$\partial H'$. We can actually picture this to be done
inside of $H$ and $H'$ respectively.  
Observe that the two pieces $\big([0,\pi] \times D^2\big), \big([\pi,2\pi] \times D^2\big)$, attached to the complement described in description (\ref{eqn:complement of K_1}) above, really descend to a solid torus. Moreover observe that we can understand the boundary of $N_{\boldsymbol b}$ as decomposes as $(\partial H \setminus \nu K')\cup (\{0,\pi\} \times D^2)$. Hence gluing back the surgery torus to the space given in (\ref{eqn:complement of K_1}) gives
\[  
 	 \Big( \big( D \times [0,1] \big) \setminus
	 N_{\boldsymbol b} \Big) \cup \Big( \big( D' \times [0,1] \big) \setminus
	 N_{\boldsymbol b'} \Big) / \sim_{\partial N_{\boldsymbol b}},
\]
which describes $\#_{\boldsymbol b}M$. This completes the proof.
\end{proof}
\begin{figure}[h] 
\center
\includegraphics{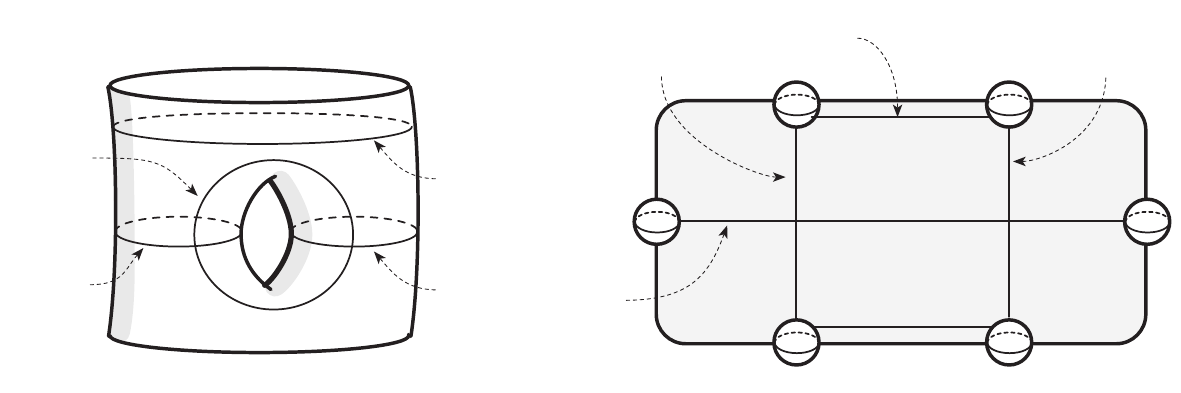}
\put(-326,82){\large{$\Sigma'$}}
\put(-213,57){$\delta$}
\put(-213,25){$\beta$}
\put(-324,29){$\alpha$}
\put(-323,67){$\gamma$}
\put(-101,99){$\delta$}
\put(-153,94){$\alpha$}
\put(-26,94){$\beta$}
\put(-169,25){$\gamma$}
\caption{Two identifications of $\Sigma'$ with the curves
$\alpha,\beta,\gamma,\delta$ used in Lemma
\ref{lem:local mon}.}
\label{fig:sigma'}
\end{figure}
Recall that $\Sigma'$ denotes the genus-$1$ surface with two boundary components
understood as obtained by the $2$-fold connected sum of $D$ and $D'$.
Recall further $\#_{\boldsymbol a}M$ is naturally isomorphic to $\Sigma' \times
[0,1]$. We would now like to express the surgery along $K,K' \subset
\#_{\boldsymbol a}M$ in the above description of $\#_{\boldsymbol b}M$ as a
sequence of $\pm 1$-surgeries along certain curves on $\Sigma'$, where the
framing is measured with respect to $\Sigma'$.
\begin{lem}
\label{lem:local mon}
Let $\alpha,\beta,\gamma,\delta \subset \Sigma'$ denote the curves described in Figure~\ref{fig:sigma'}. Then 
setting $\psi = (\tau_\alpha \tau_\beta \tau_\gamma)^2 (\tau_{\delta})^{-1}$ we have
\[
	(\#_{\boldsymbol a}M) (K,K') =	\Big( \big(  \Sigma' \times [0,1] \big) \cup \big(  \Sigma' \times [2,3] \big) \Big) / \sim_\psi,
\]
where we identify $(p,1)$ with $(\psi(p),2)$ for each $p \in \Sigma'$.
\end{lem}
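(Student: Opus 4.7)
My plan is to reduce the statement to two standard applications of the ``surgery equals Dehn twist'' principle for curves lying on an embedded surface. Recall that if $c$ is a simple closed curve on a two-sided surface $S$ sitting inside a $3$-manifold, framed by $S$ itself, then $(\pm 1)$-surgery on $c$ is diffeomorphic to cutting along $S$ and regluing by $\tau_c^{\mp 1}$. Using the product identification $\#_{\boldsymbol a}M \cong \Sigma' \times [0,1]$ noted just before the lemma, I would treat the surgeries along $K'$ and $K$ separately, placing each knot onto an appropriate level surface and then interpreting the surgery as a cut-and-reglue of $\Sigma' \times [0,1]$.

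First I would handle $K'$: isotope it onto a level $\Sigma' \times \{s'\}$ where, under the right-hand identification in Figure~\ref{fig:sigma'}, it represents the curve $\delta$. Its framing $+1$, measured with respect to $\Sigma'$, agrees with the level-surface framing plus one, so the $(+1)$-surgery amounts to cutting $\Sigma' \times [0,1]$ along this level and regluing via $\tau_\delta^{-1}$. This accounts for the factor $(\tau_\delta)^{-1}$ in $\psi$.

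The heart of the argument is the $0$-surgery on $K$. Because the framing is $0$ rather than $\pm 1$, the surface-surgery principle does not apply directly, and some Kirby calculus is needed. My plan is to slide the $0$-framed $2$-handle attached along $K$ over a sequence of cancelling $(\pm 1)$-framed handles along parallel copies of $\alpha$, $\beta$, $\gamma$ sitting on nearby level surfaces, until the $K$-handle has been removed and one is left with only $(\pm 1)$-framed surgeries on surface curves. Each of these then contributes a Dehn twist by the first principle. Tracking how $K$ threads through the handle of $\Sigma'$---which is visible in either of the two identifications in Figure~\ref{fig:sigma'}---should produce exactly the composition $(\tau_\alpha\tau_\beta\tau_\gamma)^2$; combining it with the twist $\tau_\delta^{-1}$ from the first step, placed on the second interval $[2,3]$, gives the claimed formula for $\psi$.

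The main obstacle will be this second step: performing the Kirby moves explicitly and verifying that the resulting sequence of Dehn twists is precisely $\alpha,\beta,\gamma,\alpha,\beta,\gamma$, rather than some other composition that yields the same surgered manifold but a different monodromy. This requires careful bookkeeping of the framings through the handle slides and a careful identification of each newly appearing curve with one of $\alpha,\beta,\gamma$ under the identifications pictured in Figure~\ref{fig:sigma'}. Once the order of the Dehn twists is pinned down, the assembly into the mapping-cylinder-like expression on $\Sigma' \times [0,1] \cup \Sigma' \times [2,3]$ is a formality.
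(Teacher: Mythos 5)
Your proposal identifies the right general mechanism (the ``$(\mp 1)$-surgery on a surface curve $=$ cut and reglue by $\tau_c^{\pm 1}$'' principle), and the $K'$ step is plausibly consistent with the $\tau_\delta^{-1}$ factor. But the proof has a genuine gap exactly where you flag ``the main obstacle'': the $0$-framed surgery on $K$. A $0$-framed surgery on a knot in $\Sigma'\times[0,1]$ does not in general preserve the structure of a $\Sigma'$-fibration over the interval, so until you know the surgered manifold fibres again, ``the monodromy contributed by the $K$-surgery'' is not even a well-defined mapping class; and your proposed remedy --- inserting cancelling pairs of $(\pm1)$-framed parallel surface curves and sliding the $K$-handle over them until it disappears --- is only announced, not carried out. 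There is no a priori reason such a sequence of slides terminates with the $K$-handle eliminated, and pinning down that the residue is the specific word $(\tau_\alpha\tau_\beta\tau_\gamma)^2$ (rather than some other factorisation of some other mapping class) \emph{is} the content of the lemma. The paper resolves both issues at once: Lemma~\ref{lem:aM to bM} first identifies $(\#_{\boldsymbol a}M)(K,K')$ with $\#_{\boldsymbol b}M$, which manifestly fibres over $[0,1]$ with fibre $\Sigma'$, and only then is the monodromy $\psi$ computed, by pushing a cut system $A,B,C$ through the Kirby diagram and reducing to a diffeomorphism of a punctured disc via the auxiliary word $P=\tau_\gamma\tau_\beta\tau_\alpha\tau_\gamma$.

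A second, related problem is your implicit assumption that the two surgeries can be decoupled onto disjoint level surfaces, with $K$ accounting for $(\tau_\alpha\tau_\beta\tau_\gamma)^2$ and $K'$ for $\tau_\delta^{-1}$. In the paper's construction the $+1$-surgery on $K'$ is performed on a curve isotoped onto $\partial H$, the boundary of the handle neighbourhood created by the $K$-surgery --- not onto a level copy of $\Sigma'$ in the original product. The two surgeries conspire: the first destroys the product structure and the second restores a fibration, so only the composite has a monodromy, and the factorisation $\psi=(\tau_\alpha\tau_\beta\tau_\gamma)^2\tau_\delta^{-1}$ is a factorisation of that composite rather than a surgery-by-surgery bookkeeping. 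To salvage your route you would need to either (i) actually exhibit the handle slides and verify each intermediate framing, or (ii) first prove the fibration statement independently (essentially re-proving Lemma~\ref{lem:aM to bM}) and then compute the monodromy by its action on a cut system, which is what the paper does.
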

\begin{proof}
Recall that in Lemma \ref{lem:aM to bM} we identified $(\#_{\boldsymbol a}M)
(K,K')$ with $\#_{\boldsymbol b}M$. Observe that the latter space admits the
structure of a $\Sigma'$-fibration which is induced by the projection on the
unit interval $[0,1]$. The map $\psi$ is actually a factorisation of the
monodromy of this fibration into Dehn twists. A little caution is needed:
unfortunately we are actually computing the inverse of $\psi$,
since in our computations we push arcs from the top to the bottom, not the other
way round.
\begin{figure}[htp] 
\center
\includegraphics{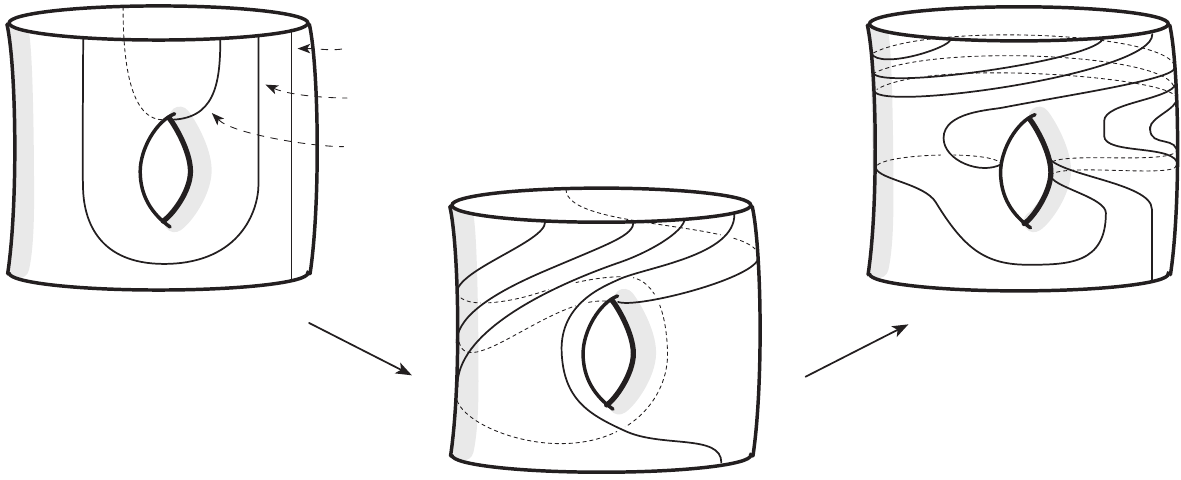}
\put(-238,121){$A$}
\put(-238,108){$B$}
\put(-238,93){$C$}
\put(-255,28){$\psi^{-1}$}
\put(-90,28){$P$}
\caption{A cut system for $\Sigma'$ and its image under $P \circ \psi^{-1}$.}
\label{fig:cut system}
\end{figure}
\parskip 0pt

Let $A,B,C \subset \Sigma'$ denote the cut system given in the left part of
Figure~\ref{fig:cut system}. The images of this cut system under $\psi^{-1}$ are given in the
middle part of Figure~\ref{fig:cut system}. The images were computed as follows: recall that $\#_{\boldsymbol a}M$ did correspond to $\Sigma' \times [0,1]$ which we
understand as obtained by thicken up the shaded area in Figure~\ref{fig:sigma'} (or Figure~\ref{fig:Kirby K,K'} respectively). A description of
the knots $K,K'$ with respect to this perspective is given in Figure~\ref{fig:Kirby K,K'}. Understand the result of surgery $(\#_{\boldsymbol a}M)(K,K')$ on
$K,K'$ as embedded in the Kirby diagram given in Figure~\ref{fig:Kirby K,K'}.
We can now recover the cut system, chosen above, in the Kirby diagram and
manipulate it using Kirby calculus. The actual computations are given in
Figures~\ref{fig:image A}, \ref{fig:image B} and \ref{fig:image C} on
p.~\pageref{fig:image A}ff at the end of the paper.
\parskip 0pt

We could now just compare these images with the ones under the inverse of $(\tau_\alpha
\tau_\beta \tau_\gamma)^2 (\tau_{\delta})^{-1}$ and conclude that
both agree up to isotopy, showing that $\psi = (\tau_\alpha \tau_\beta
\tau_\gamma)^2 (\tau_{\delta})^{-1}$.
\begin{figure}[h] 
\center
\includegraphics{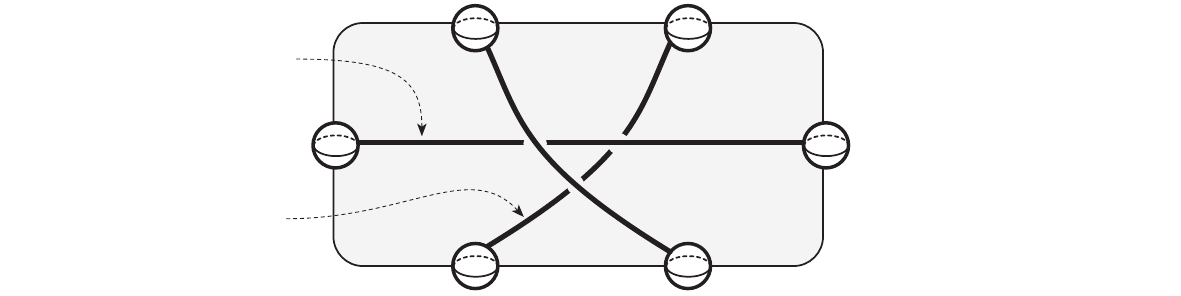}
\put (-282,70){$(K,0)$}
\put (-297,24){$(K',+1)$}
\caption{A Kirby diagram showing $K,K'$ and $\Sigma'$.}
\label{fig:Kirby K,K'}
\end{figure}
However the usual way to compute the factorisation of a monodromy map of a
compact surface is by by reducing it to the case of a self-diffeomorphism of a
disc with punctures, cf.~\cite{MR1414898}.
Referring to Figure~\ref{fig:image of a} (on
p.~\pageref{fig:image of a}) the image of $\alpha$ under $\psi^{-1}$ is given
as $\beta $. Set
\[
	P = \tau_\gamma \thinspace \tau_\beta \thinspace
\tau_\alpha \thinspace \tau_\gamma
\]
and note that $P$ maps $\beta$ to $\alpha$. Therefore $P \circ \psi^{-1}$ fixes
the curve $\alpha$ and hence can now be interpreted as a self-diffeomorphism
of the $3$-fold punctured disc $D_3$ obtained by cutting $\Sigma^\prime$ along
$\alpha$. Note that $A,B \subset \Sigma'$ descends to a cut system of $D_3 =
\Sigma^\prime\setminus\alpha$. Therefore all the data of  $P \circ \psi^{-1}$
is encoded in the images of $A,B \subset \Sigma'$.
The images of $A,B \subset \Sigma'$ under $P \circ \psi^{-1}$ are given in the
right part of Figure~\ref{fig:cut system} (cf.\ Figure~\ref{fig:A under P} and
Figure~\ref{fig:B under P} on p.~\pageref{fig:A under P} and \pageref{fig:B under P} for the
actual computations). We conclude that we have
\[
			P \circ \psi^{-1} = \tau_\alpha^{-1} \thinspace \tau_\beta^{-1} \thinspace
			\tau_\delta.
\]
Therefore $\psi^{-1}$ is given by $(\tau_\alpha^{-1} \thinspace \tau_\beta^{-1}
\thinspace \tau_\delta) \circ P^{-1}$, which, computing the inverse, is exactly
what we intended to show.
\end{proof}
\begin{figure}[h] 
\center
\includegraphics{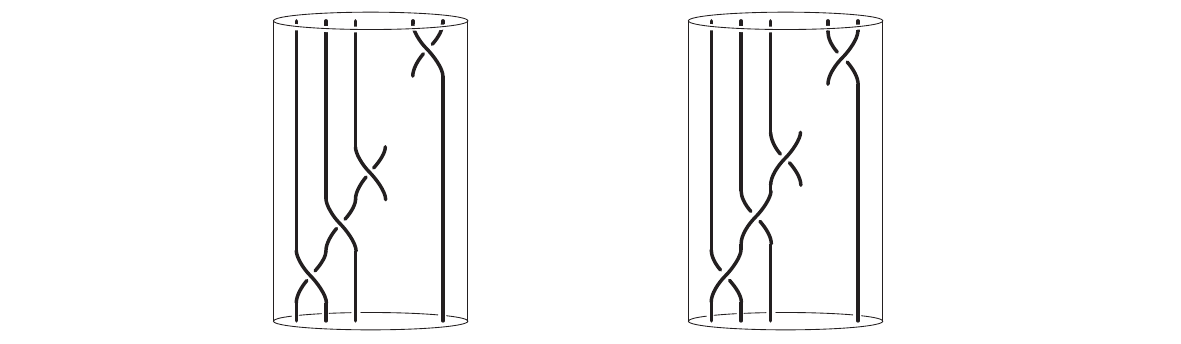}
\put (-112,67){$\dots$}
\put (-280,80){$S^-$}
\put (-160,80){$S^+$}
\put (-232,67){$\dots$}
\caption{Description of the standard braids.}
\label{fig:stdandard braids}
\end{figure}
With this in hand we are able to compute the monodromy $\phi$ for
the case that $K_0$ and $K_1$ are chosen among the standard braids $S^+, S^-$
given in Figure~\ref{fig:stdandard braids}.
\begin{prop}
\label{cor:monodromy of standard braids}
Let $\alpha_1,\ldots,\alpha_n,\gamma_1,\ldots,\gamma_{n-1},\delta^{(\prime)}_{1,2},\ldots,\delta^{(\prime)}_{n-1,n} \subset \Sigma$ denote the curves indicated in Figure~\ref{fig:monodromy curves}. Then we have
\begin{list}{}{}
	\item[(i)] $\phi = \prod_{i=1}^{n-1} \tau_{\gamma_{i}}\thinspace \tau_{\alpha_i} \thinspace \tau_{\alpha_{i+1}}\thinspace \tau_{\gamma_{i}} $, for the pair of knots $(K_0,K_1)=(S^+,S^+)$,
	\item[(ii)] $\phi = \prod_{i=1}^{n-1} ( \tau_{\gamma_i} \thinspace \tau_{\alpha_i} \thinspace\tau_{\alpha_{i+1}})^2 \thinspace(\tau_{\delta_{i,i+1}})^{-1}  $, for the pair of knots $(K_0,K_1)=(S^-,S^+)$ and 
		\item[(iii)] $\phi = \prod_{i=1}^{n-1} \tau_{\alpha_i} \thinspace\tau_{\alpha_{i+1}} \thinspace( \tau_{\gamma_i} \thinspace \tau_{\alpha_i} \thinspace\tau_{\alpha_{i+1}})^2 \thinspace(\tau_{\delta_{i,i+1}})^{-1} \thinspace(\tau_{\delta'_{i,i+1}})^{-1}  $, for the pair of knots $(K_0,K_1)=(S^-,S^-)$.
\end{list} 
\end{prop}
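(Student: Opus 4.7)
The plan is to reduce the proof to a local computation at each pair of adjacent strands and then apply Lemma~\ref{lem:local mon} (or a framing variant of it) at each level $i$. The standard braids $S^+$ and $S^-$ on $n$ strands each contain crossings between consecutive strands $(i, i+1)$ for $i = 1, \ldots, n-1$, and these crossings can be separated into disjoint boxes along the braid direction. Consequently, the full monodromy of the open book $(\Sigma, \phi) = (D^2, \id) \#_{K_0, K_1} (D^2, \id)$ factors as a product $\phi = \prod_{i=1}^{n-1} \phi_i$ of local monodromy contributions, each supported in a neighbourhood of the pair of crossings at strand level $(i, i+1)$, and in that neighbourhood the page looks like the genus-$1$ surface $\Sigma'$ of Lemma~\ref{lem:local mon}.

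First I would handle case (ii), $(K_0, K_1) = (S^-, S^+)$, since it matches Lemma~\ref{lem:local mon} directly. The local picture at strand level $i$ is precisely the setup of Figure~\ref{fig:K and K'}: the zero-framed knot $K$ arises from the matched pair of strands while the $+1$-framed knot $K'$ records the sign discrepancy between the negative crossing in $S^-$ and the positive crossing in $S^+$. Re-labelling the four curves $\alpha,\beta,\gamma,\delta$ of Figure~\ref{fig:sigma'} as $\gamma_i, \alpha_i, \alpha_{i+1}, \delta_{i,i+1}$ of Figure~\ref{fig:monodromy curves}, Lemma~\ref{lem:local mon} immediately gives $\phi_i = (\tau_{\gamma_i} \tau_{\alpha_i} \tau_{\alpha_{i+1}})^2 (\tau_{\delta_{i,i+1}})^{-1}$, establishing (ii).

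For cases (i) and (iii) I would re-run the argument of Lemmas~\ref{lem:aM to bM} and~\ref{lem:local mon} with modified framings on $K$ and $K'$, since it is the crossing signs that control which surgery coefficients appear. In case (i), both crossings are positive, so both auxiliary knots sit at framing $0$; the corresponding Kirby computation eliminates the $(\tau_{\delta_{i,i+1}})^{-1}$ correction entirely and collapses the repeated block $(\tau_{\gamma_i} \tau_{\alpha_i} \tau_{\alpha_{i+1}})^2$ to $\tau_{\gamma_i} \tau_{\alpha_i} \tau_{\alpha_{i+1}} \tau_{\gamma_i}$. In case (iii), both crossings are negative, so by the obvious $\Z/2$-symmetry interchanging the roles of $D$ and $D'$ one picks up a second correction surgery along the mirror curve $\delta'_{i,i+1}$ together with an extra $\tau_{\alpha_i} \tau_{\alpha_{i+1}}$ prefix coming from the additional braid twist on the $D'$-side. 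An alternative, cleaner derivation of (i) and (iii) from (ii) would be to interpret the change of crossing sign as a local binding-sum operation and to invoke Theorem~\ref{prop:open book supporting the binding sum}, which translates the sign change into the appropriate additional Dehn twists.

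The hard part will be the bookkeeping. Two points need genuine care: first, justifying the factorisation $\phi = \prod \phi_i$ requires one to check that the local surgery neighbourhoods can be made disjoint and that the Dehn twists along the disjointly supported curves $\alpha_j, \gamma_j, \delta_{j,j+1}$ at different levels commute freely so that the product can be written in any convenient order; second, for cases (i) and (iii), one must re-execute the Kirby-calculus computations displayed in Figures~\ref{fig:image A}--\ref{fig:B under P} with the altered framings and verify that the resulting cut-system images give exactly the stated words. Once this bookkeeping is in place, all three formulas follow uniformly by assembling the local contributions.
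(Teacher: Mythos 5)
Your overall strategy coincides with the paper's: both arguments reduce to the two‑strand local model and obtain part (ii) by introducing the crossings at the levels $(i,i+1)$ one at a time. The paper phrases this as starting from the $n$-fold fibre sum along two \emph{trivial} $n$-strand braids and applying Lemma~\ref{lem:aM to bM} exactly $n-1$ times, which is precisely your factorisation $\phi=\prod_i\phi_i$ with each $\phi_i$ computed by Lemma~\ref{lem:local mon} under the relabelling $\alpha,\beta,\gamma,\delta\mapsto\gamma_i,\alpha_i,\alpha_{i+1},\delta_{i,i+1}$; so far you are on the paper's track. Where you genuinely diverge is in cases (i) and (iii). You propose to re-execute the whole Kirby-calculus computation of Lemma~\ref{lem:local mon} with altered surgery framings, whereas the paper instead performs a Rolfsen twist (Figure~\ref{fig:crossing sign}) converting a positive crossing into a negative one (and vice versa) at the cost of a single additional $\pm1$-surgery along an unknot encircling the two strands; this reduces (i) and (iii) to the already-verified case (ii), and the extra surgeries are then translated directly into the extra or cancelled Dehn twists in the stated words. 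The paper's route buys reuse of the one hard cut-system computation (Figures~\ref{fig:image A}--\ref{fig:B under P}) instead of requiring three separate ones; your route is viable in principle, but as written the key intermediate claims (e.g.\ that in case (i) ``both auxiliary knots sit at framing $0$'' and that this collapses $(\tau_{\gamma_i}\tau_{\alpha_i}\tau_{\alpha_{i+1}})^2\tau_{\delta_{i,i+1}}^{-1}$ to $\tau_{\gamma_i}\tau_{\alpha_i}\tau_{\alpha_{i+1}}\tau_{\gamma_i}$) are reverse-engineered from the answer rather than derived, so the genuinely new content of (i) and (iii) remains unverified in your plan. Your alternative suggestion of deducing the sign change from Theorem~\ref{prop:open book supporting the binding sum} is not what the paper does and would require a separate justification identifying a crossing change with a binding sum, which is not immediate.
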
 
\begin{figure}[h] 
\center
\includegraphics{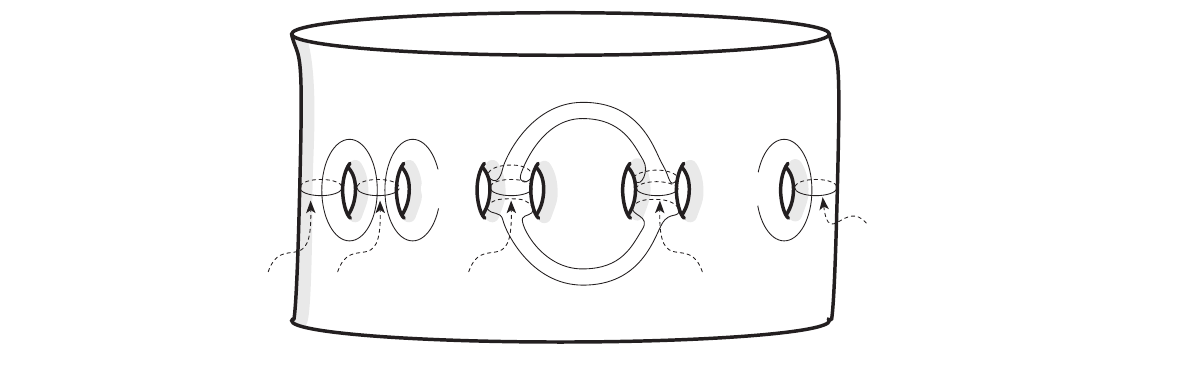}
\put (-272,82){\large{$\Sigma$}}
\put (-180,82){$\delta_{i,j}$}
\put (-180,16){$\delta^\prime_{i,j}$}
\put (-237,72){$\gamma_1$}
\put (-219,72){$\gamma_2$}
\put (-123,72){$\gamma_{n-1}$}
\put (-272,22){$\alpha_1$}
\put (-250,22){$\alpha_2$}
\put (-213,22){$\alpha_i$}
\put (-140,22){$\alpha_j$}
\put (-90,35){$\alpha_{n}$}
\put (-219,52){$\ldots$}
\put (-178,52){$\ldots$}
\put (-138,52){$\ldots$}
\caption{The curves used in
Corollary
\ref{cor:monodromy of standard braids} and Proposition \ref{prop:multisections}.}
\label{fig:monodromy curves}
\end{figure}
\begin{proof}
We start by proving the second part of the statement. Let $n \in \N$ be the
braid index of $K_0$ and $K_1$ respectively. Consider two copies $L_0,L_1$ of
the trivial braid of $n$-strands describing an $n$-component unlink  and
perform a fibre connected sum for each pair of unknots. By applying Lemma
\ref{lem:aM to bM} exactly $n-1$ times we can turn this $n$-fold fibre
connected sum into the fibre connected sum along $S^+$ and $S^-$. Keeping track
of the change of monodromy completes the proof of the second part.
\begin{figure}[h] 
\center
\includegraphics{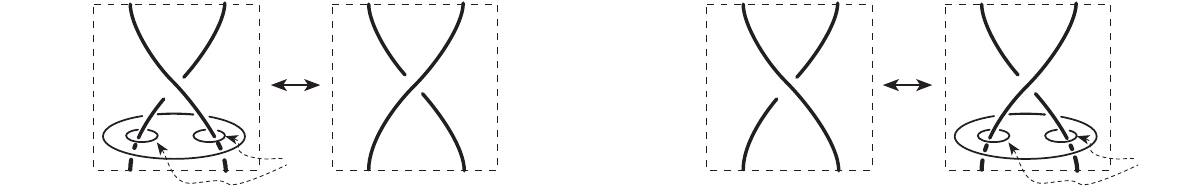}
\put(-11,9) {\small{$+1$}}
\put(-34,22) {\small{$-1$}}
\put(-261,9) {\small{$-1$}}
\put(-281,22) {\small{$+1$}}
\caption{Switching between positive and negative
crossings.}
\label{fig:crossing sign}
\end{figure}
\parskip 0pt

In Lemma \ref{lem:aM to bM} we are considering the result of fibre summing a
negative crossing (the arcs $a_1,a_2$) with a positive one (the arcs
$b_1,b_2$). Perform a surgery as indicated in the left part of Figure~\ref{fig:crossing sign} (or right part of Figure~\ref{fig:crossing sign} respectively) and observe that we turned the negative (or positive respectively)
crossing into a positive (or negative respectively) crossing. This actually is
nothing but a certain Rolfsen twist. However by performing one of these
surgeries we can always set up the situation for which Lemma \ref{lem:aM to bM}
applies. Translating the surgery into the language of Dehn twists sets the way
to compute the monodromy for the remaining cases and we are done.
\end{proof}
\begin{figure}[h] 
\center
\includegraphics[scale=0.9]{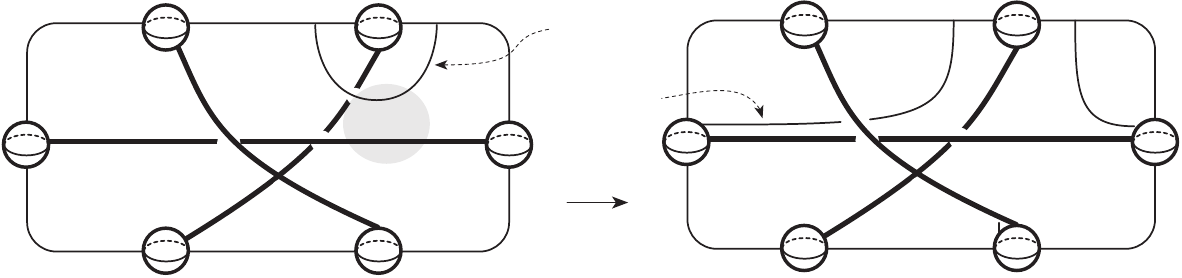}  
\put (-163,62){\small$A$}
\put (-168,45){\small$\psi^{-1}(A)$}
\caption{The shaded area indicates that a
handle slide is performed.}
\label{fig:image A}
\end{figure}
\subsection{Final computation of the monodromy}
We almost have everything in place to compute the monodromy map $\phi$ of the
fibre sum along multi-sections (see description (\ref{eqn:multisum}) at the
beginning of the section). The last ingredient is the following normal
form for a braided knot $K$. Let $B$ be a braid representation of $K \subset
S^3$ with braid index $n\in \N$ and let $S=S^+$ be the positive standard braid
indicated in the right part of Figure~\ref{fig:stdandard braids}.
By an isotopy of $K$ we may assume that the permutation induced by $B$ is given
by $(n \ 1 \ \ldots \ n-1)$. Therefore $P = B \ast S^{-1}$ describes a pure
braid for which we obviously have $B = P \ast S$. Here ``$\ast$'' denotes the
composition of braids in the braid group.
\parskip 0pt

According to \cite{MR1414898} one can assign a pure braid to a diffeomorphism
of the $n$-fold punctured disc, equal to the identity near the boundary, and
vice versa. Let $\phi_K$ denote the map corresponding to the pure braid $P$
(which itself, by the consideration above, is induced by $K$ ). Note that the map
$\phi_K$ encodes all information about $K$.
\parskip 0pt

Let us return to the open book description $(\Sigma,\phi)$ of the fibre sum
along $K_0$ and $K_1$ (cf.\ (\ref{eqn:multisum}) above). Denote by $\phi_{K_0}$
and $\phi_{K_1}$ the maps associated to the knots $K_0$ and $K_1$ as described
above. These maps trivially extend to $\Sigma$. Together with the first part of
Proposition~\ref{cor:monodromy of standard braids} we finally obtain the
following description of $\phi$.
\begin{thm}
\label{prop:multisections}
The monodromy $\phi$ of the open book described in~(\ref{eqn:multisum}) is given
by
\[
 			\phi = \Big( \prod_{i=1}^{n-1}\tau_{\gamma_{i}} \thinspace \tau_{\alpha_i} \thinspace
			\tau_{\alpha_{i+1}}\thinspace \tau_{\gamma_{i}} \Big)  \circ \phi_{K_0}
			\circ \phi_{K_1},
\]
where $\phi_{K_0}$, $\phi_{K_1}$ are as described above and
$\alpha_1,\ldots,\alpha_n,\gamma_1,\ldots,\gamma_{n-1} \subset \Sigma$ denote
the curves indicated in Figure~\ref{fig:monodromy curves}. \qed
\end{thm}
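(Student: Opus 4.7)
The plan is to reduce the general case to the standard-braid case already handled in Proposition~\ref{cor:monodromy of standard braids}(i), by exploiting the normal form $K_j = P_j \ast S^+$ recalled just above the theorem.

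First, I would isotope $K_0$ and $K_1$ so that, along the $S^1$-direction of the mapping torus of $(D,\id)\sqcup(D',\id)$, each knot splits into two pieces: the pure-braid piece $P_j$ occurring over an arc $I_1 \subset S^1$, and the standard-braid piece $S^+$ occurring over the complementary arc $I_2 \subset S^1$. Correspondingly, the mapping torus decomposes into two slabs along the tori over $\partial I_1 = \partial I_2$, in each of which the pair $K_0 \sqcup K_1$ realises the indicated sub-braid. The fibre connected sum can then be carried out slab-by-slab, since the surgery is supported in a neighbourhood of the two knots.

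Next, I would analyse each slab separately. Over $I_2$ the pair is exactly $(S^+, S^+)$, so by Proposition~\ref{cor:monodromy of standard braids}(i) this slab, after surgery, contributes the mapping-torus factor with monodromy $\prod_{i=1}^{n-1} \tau_{\gamma_i}\thinspace \tau_{\alpha_i}\thinspace \tau_{\alpha_{i+1}}\thinspace \tau_{\gamma_i}$. Over $I_1$ the knots are pure braids, so their induced return maps on $D$ and $D'$ preserve the $n$ marked points on each disc and restrict to diffeomorphisms of the $n$-punctured discs, equal to the identity near the boundary. By the braid--diffeomorphism dictionary recalled just before the theorem, these maps -- extended by the identity to all of $\Sigma$ -- are precisely $\phi_{K_0}$ and $\phi_{K_1}$. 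Since their supports lie in the disjoint subsurfaces of $\Sigma$ coming from $D$ and $D'$ respectively, they commute.

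Finally, re-gluing the two modified slabs along their common boundary tori translates into composing the two monodromy contributions in the cyclic order dictated by the $S^1$-direction, yielding
\[
\phi = \Big(\prod_{i=1}^{n-1} \tau_{\gamma_{i}}\thinspace \tau_{\alpha_i}\thinspace \tau_{\alpha_{i+1}}\thinspace \tau_{\gamma_{i}}\Big) \circ \phi_{K_0} \circ \phi_{K_1},
\]
as claimed. The hard part will be justifying the slab-wise decomposition: one must check that the fibre sum is sufficiently local along the $S^1$-factor that it decomposes cleanly as two independent surgeries, and that the global monodromy of the re-glued mapping torus is the composition of the slab-wise monodromies in the correct order. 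Once this local-to-global step is in place, the result follows immediately from Proposition~\ref{cor:monodromy of standard braids}(i) combined with the interpretation of the pure-braid factors as $\phi_{K_0}$ and $\phi_{K_1}$.
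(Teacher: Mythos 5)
Your proposal is correct and follows essentially the same route as the paper, which proves Theorem~\ref{prop:multisections} precisely by writing each braid in the normal form $B_j = P_j \ast S^+$, letting the pure-braid factors give the punctured-disc diffeomorphisms $\phi_{K_0},\phi_{K_1}$ (extended trivially to $\Sigma$), and invoking Proposition~\ref{cor:monodromy of standard braids}(i) for the $(S^+,S^+)$ contribution. In fact you make explicit the slab-wise decomposition of the mapping torus and the order of composition, which the paper leaves implicit (it states the theorem with a \qed, treating the preceding discussion as the proof).
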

\begin{figure}[htp] 
\center
\includegraphics[scale=0.9]{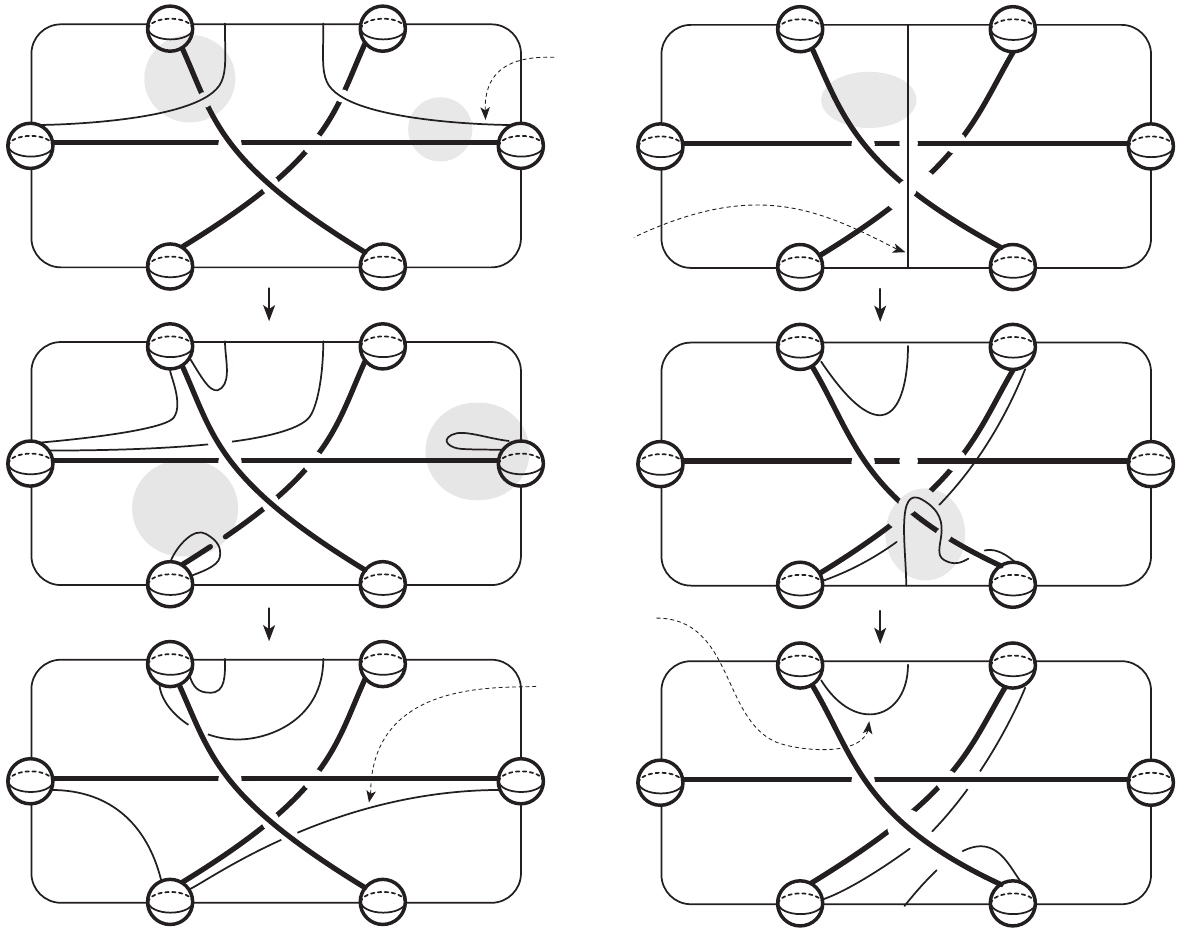}
\put (-163,225){\small$B$}
\put (-168,67){\small$\psi^{-1}(B)$}
\put (-153,178){\small$C$}
\put (-168,86){\small$\psi^{-1}(C)$}
\caption{The images of $B$ and $C$ under $\psi^{-1}$. The shaded areas indicate that an
isotopy or a handle slide is performed.}
\label{fig:image B}
\end{figure}
\begin{figure}[htp] 
\center
\includegraphics[scale=0.9]{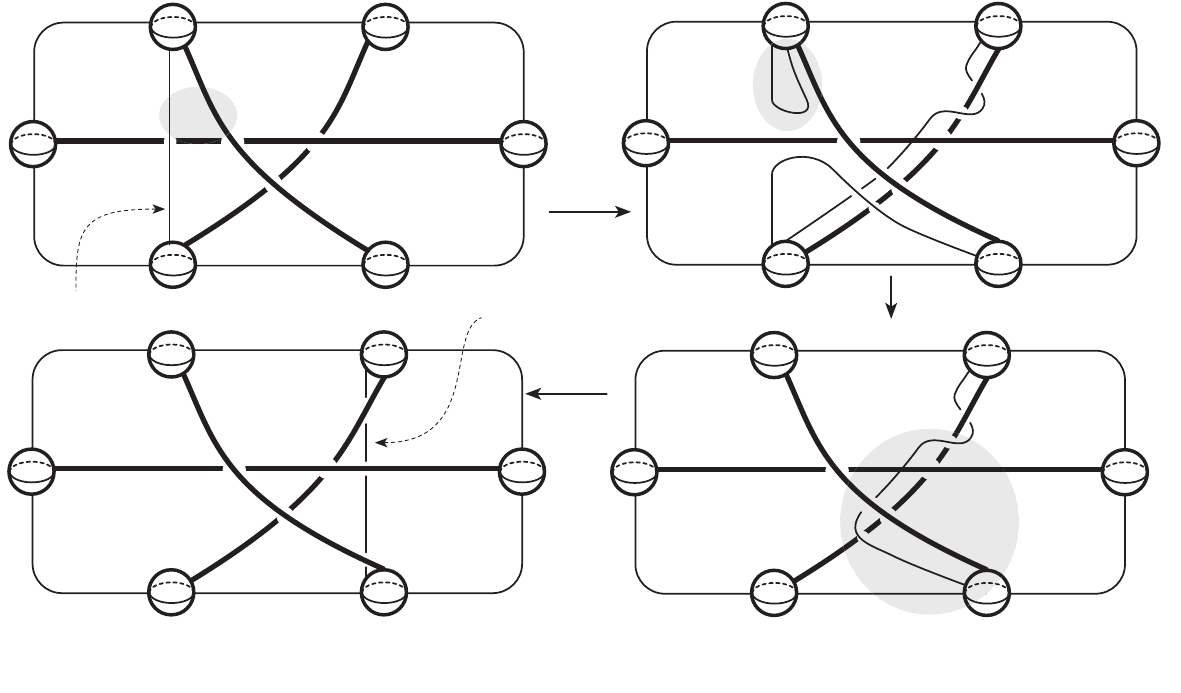} 
\put(-291,98){\small$\alpha$}
\put(-179,96){\small$\beta$}
\caption{Computation of the image of $\alpha$ under $\psi^{-1}$. The shaded areas indicate that an
isotopy or a handle slide is performed.}
\label{fig:image of a}
\end{figure}
\begin{figure}[htp] 
\center
\includegraphics[scale=1]{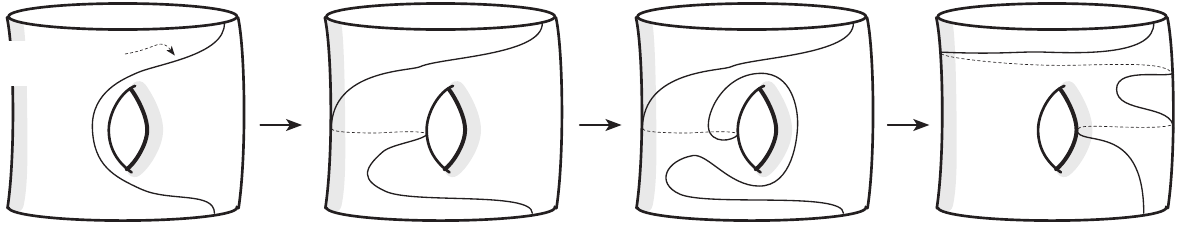}
\put(-340,45){\small$\psi^{-1}(A)$}
\put(-268,35){\small$\tau_\alpha \thinspace \tau_\gamma$}
\put(-177,35){\small$\tau_\gamma \thinspace \tau_\beta$}
\put(-82,35){\small${\simeq}$}
\caption{Computation of the image of $\psi^{-1}(A)$ under $P =\tau_\gamma
\thinspace \tau_\beta \thinspace\tau_\alpha \thinspace \tau_\gamma$.}
\label{fig:A under P}
\end{figure}
\begin{figure}[htp] 
\center
\includegraphics[scale=1]{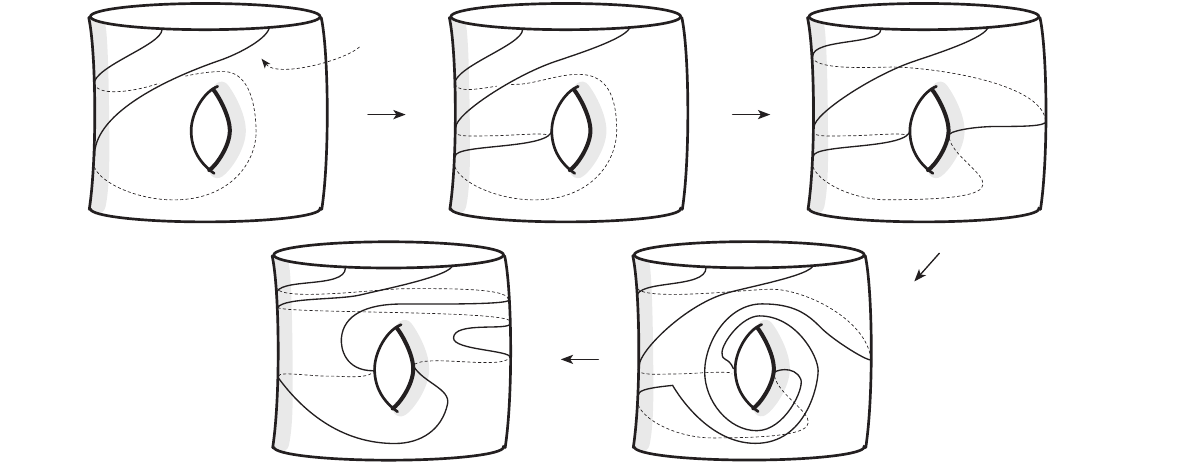}
\put(-245,128){\small$\psi^{-1}(B)$}
\put(-240,90){\small$\tau_\alpha \thinspace \tau_\gamma$}
\put(-130,90){\small$\tau_\beta$}
\put(-70,50){\small$\tau_\gamma$}
\put(-176,40){\small$\simeq$}
\caption{Computation of the image of $\psi^{-1}(B)$ under $P =\tau_\gamma
\thinspace \tau_\beta \thinspace\tau_\alpha \thinspace \tau_\gamma$.}
\label{fig:B under P}
\end{figure}
%
\begin{bibdiv}
	\begin{biblist}
		\bibselect{bib}
	\end{biblist}
\end{bibdiv} 
%
\end{document}